\def\RSthmtxt{theorem~}\newref{thm}{name = \RSthmtxt}}
\def\RSlemtxt{lemma~}\newref{lem}{name = \RSlemtxt}}
\numberwithin{equation}{section}
\numberwithin{figure}{section}
\newenvironment{lyxlist}[1]
	{\begin{list}{}
		{\settowidth{\labelwidth}{#1}
		 \setlength{\leftmargin}{\labelwidth}
		 \addtolength{\leftmargin}{\labelsep}
		 }}
	{\end{list}}
\theoremstyle{plain}
\newtheorem{thm}{\protect\theoremname}
\theoremstyle{remark}
\newtheorem{rem}{\protect\remarkname}
\theoremstyle{plain}
\newtheorem{prop}{\protect\propositionname}
\theoremstyle{plain}
\newtheorem*{thm*}{\protect\theoremname}
\theoremstyle{plain}
\newtheorem{lem}{\protect\lemmaname}
\providecommand{\lemmaname}{Lemma}
\providecommand{\propositionname}{Proposition}
\providecommand{\remarkname}{Remark}
\providecommand{\theoremname}{Theorem}
\begin{document}

\title{Scattering for NLS with a sum of two repulsive potentials}

\author{David Lafontaine}
\thanks{d.lafontaine@bath.ac.uk, University of Bath, Department of Mathematical Sciences}

\begin{abstract}
We prove the scattering for a defocusing nonlinear Schrödinger equation
with a sum of two repulsive potentials with strictly convex level
surfaces, thus providing a scattering result in a trapped setting
similar to the exterior of two strictly convex obstacles.
\end{abstract}

\maketitle

\section{Introduction}

We are concerned by the following defocusing non-linear Schrödinger
equation with a potential

\begin{equation}
i\partial_{t}u+\Delta u-Vu=u|u|^{\alpha},\;u(0)=\varphi\in H^{1}.\label{eq:nlsv}
\end{equation}
in arbitray spatial dimension $d\geq1$. Once good dispersive properties
of the linear flow, such as Strichartz estimates described below in
the paper, are established, the local well-posedness of (\ref{eq:nlsv})
follows by usual fixed point arguments. Because of the energy conservation
law,
\[
E(u(t)):=\frac{1}{2}\int|\nabla u(t)|^{2}+\int V|u(t)|^{2}+\frac{1}{\alpha+2}\int|u(t)|^{\alpha+2}=E(u(0))
\]
this result extends to global well-posedness. Thus, it is natural
to investigate the asymptotic behavior of solutions of (\ref{eq:nlsv}).

It is well-known since Nakanishi's paper \cite{MR1726753} that for
$V=0$, in the intercritical regime
\begin{equation}
\frac{4}{d}<\alpha<\begin{cases}
+\infty & d=1,2,\\
\frac{4}{d-2} & d\geq3,
\end{cases}\label{eq:intercrit}
\end{equation}
 the solutions \textit{scatter} in $H^{1}(\mathbb{R}^{d})$, that
is, for every solution $u\in C(\mbox{\ensuremath{\mathbb{R}},}H^{1}(\mathbb{R}^{d}))$
of (\ref{eq:nlsv}), there exists a unique couple of data $\psi_{\pm}\in H^{1}(\mathbb{R}^{d})$
such that
\[
\Vert u(t)-e^{-it\Delta}\psi_{\pm}\Vert_{H^{1}(\mathbb{R}^{d})}\underset{t\rightarrow\pm\infty}{\longrightarrow0}.
\]

The inhomogeneous setting $V\neq0$ was investigated more recently,
for example in \cite{MR213495011}, \cite{AsymptAn}, \cite{MR2134950},
\cite{Forcella}. However, all these scattering results rely on a
non-trapping assumption, namely, that the potential is \emph{repulsive:
\[
x\cdot\nabla V\leq0,
\]
}or, as in \cite{Carles}, that its non-repulsive part is sufficently
small. The aim of this paper is to establish a scattering result in
a trapping situation. More precisely, we are interested in one of
the simplest unstable trapping framework, that is, the case where
$V$ is the sum of two positive, repulsive potentials with strictly
convex level surfaces. It is the potential-analog of the homogeneous
problem outside two strictly convex obstacles, and this note can be
seen as a proxy for the scattering outside two strictly convex obstacles.
This more intricate problem, where reflexions at the boundary have
to be dealt with, will be treated in \cite{LafLaurent} using ideas
developped here. Note that the case of the exterior of a star-shaped
obstacle was treated by \cite{PlanchonVega09}, \cite{IvaPlanchon10},
\cite{PlanchonVega12}.

Let us precise our setting. Let $V_{1}$ and $V_{2}$ be two positive,
smooth potentials. We will denote by $V=V_{1}+V_{2}$ the total potential.
We make the following geometrical assumptions:
\begin{lyxlist}{00.00.0000}
\item [{(G1)\label{(G1)}}] $V_{1}$ and $V_{2}$ are repulsive, that is,
there exists $a_{1}$ and $a_{2}$ in $\mathbb{R}^{d}$ such that
\[
(x-a_{1,2})\cdot\nabla V_{1,2}\leq0.
\]
Without loss of generality, we assume that $0\in[a_{1},a_{2}]$.
\item [{(G2)\label{(G2)}}] The level surfaces of $V_{1}$ and $V_{2}$
are convex, and uniformly strictly convex in the non-repulsive region:
the eigenvalues of their second fundamental forms are uniformly bounded
below by a strictly positive universal constant in $\left\{ x\cdot\nabla V>0\right\} $.
\item [{(G3)\label{(G3)}}] All the trapped trajectories of the Hamiltonian
flow associated with $-\Delta+V$ belong to a same line $\mathcal{R}\subset\left\{ x\cdot\nabla V>0\right\} $
: for any pair $\Theta_{1},\Theta_{2}$ of level surfaces of $V_{1}$
and $V_{2}$, the unique trapped ray of the geometrical optics of
$\mathbb{R}^{d}\backslash\Theta_{1}\cup\Theta_{2}$ is included in
$\mathcal{R}$.
\end{lyxlist}
A non-trivial example of potential $V=V_{1}+V_{2}$ verifying (G1)-(G2)-(G3)
is given by $V_{1}(x):=e^{-|x-a|^{2}}$, $V_{2}(x):=e^{-|x+a|^{2}}$,
the uniformity in the convexity assumption coming from the fact that
this potential has a bounded non-repulsive region.

We will, in addition, assume the following decay assumption
\begin{gather}
V\in L^{\frac{d}{2}}((1+|x|^{\beta})dx);\ \nabla V_{1},\nabla V_{2}\in L^{\frac{d}{2}};\ \nabla V\in L^{\frac{d}{2}}(|x|^{\beta}dx),\label{eq:decay_assump}\\
\text{with }\beta>\frac{4}{3}.\nonumber 
\end{gather}
 It is the (improved) multi-dimensional analog of the decay assumption
arising in \cite{AsymptAn}. And finally, that the pointwise dispersive
estimate
\begin{equation}
\Vert e^{it(-\Delta+V)}\Vert_{L^{1}\longrightarrow L^{\infty}}\lesssim\frac{1}{|t|^{d/2}}\label{eq:disp}
\end{equation}
holds. Note that, in the same way as remarked in \cite{AsymptAn}
for the one dimensional case, this last assumption is automaticaly
verified using Goldberg and Schlag's result \cite{MR2096737} under
the non-negativity and decay assumptions with $\beta\geq2$ in dimension
$d=3$. Our main result reads
\begin{thm}
\label{thm:main}Assume that $d\geq3$. Let $V_{1}$ and $V_{2}$
be two positive, repulsive (G1) smooth potentials, with convex and
uniformly strictly convex in the non-repulsive region level surfaces
(G2), and colinear trapped trajectories (G3). Assume moreover that
$V=V_{1}+V_{2}$ verifies the decay assumption (\ref{eq:decay_assump}),
and the dispersive estimate (\ref{eq:disp}). Then, in the intercritical
regime (\ref{eq:intercrit}), every solutions of (\ref{eq:nlsv})
with potential $V=V_{1}+V_{2}$ scatter in $H^{1}(\mathbb{R}^{d})$.
\end{thm}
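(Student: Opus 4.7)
The scheme I would follow is the Nakanishi-type approach to intercritical scattering, adapted to the trapped geometry: (i) derive Strichartz estimates for the linear propagator from the assumed dispersive bound; (ii) prove a global spacetime Morawetz-type estimate on $u$, which is the main new ingredient; and (iii) upgrade this to a full $L^{q}_{t}W^{1,r}_{x}$ bound via a bootstrap with the Strichartz estimates, from which scattering follows by the standard argument.

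Step (i) is essentially mechanical. The pointwise dispersive bound (\ref{eq:disp}) combined with $L^{2}$-unitarity yields the full range of Strichartz estimates by the Keel--Tao machinery; the positivity and the decay of $V$ in (\ref{eq:decay_assump}) guarantee that $\sqrt{-\Delta+V}$ and $\sqrt{-\Delta}$ define equivalent norms on $H^{1}$, so that the estimates hold at the $H^{1}$-level together with their inhomogeneous counterparts. Local well-posedness then follows by a standard fixed-point argument in the subcritical Strichartz norm.

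The heart of the proof is the Morawetz bound. I would use a virial multiplier of the form $m(x)=m_{1}(x)+m_{2}(x)$, where each $m_{j}$ is a smoothed, radially increasing version of $|x-a_{j}|$, so that for each potential separately the identity produces a favourable sign from $\nabla m_{j}\cdot\nabla V_{j}\leq 0$ via \hyperref[(G1)]{(G1)}, while the diagonal kinetic term $(D^{2}m)\nabla u\cdot\nabla\bar{u}$ is non-negative by convexity of $m$ and bounds the transverse gradient. Three obstructions must then be handled: the cross potential terms $\nabla m_{j}\cdot\nabla V_{k}$ for $j\neq k$, the bi-Laplacian remainder $\Delta^{2}m$, and the unfavourable contribution near the trapped ray $\mathcal{R}\subset\{x\cdot\nabla V>0\}$. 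The first two are controlled by the weighted decay in (\ref{eq:decay_assump}) with $\beta>4/3$, in the spirit of \cite{AsymptAn}, combined with \hyperref[(G2)]{(G2)}, which forces $|\nabla V_{k}|$ to decay sufficiently on the side of $a_{j}$ opposite to $a_{k}$.

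The main obstacle, specific to the two-bump geometry, is the contribution near $\mathcal{R}$. Exploiting \hyperref[(G3)]{(G3)}, which confines all trapped trajectories to a single line, I would split the Morawetz integrand into a longitudinal part along $\mathcal{R}$ and a transverse one. In the transverse directions, the uniform strict convexity of the level surfaces of $V_{1}$ and $V_{2}$ in $\{x\cdot\nabla V>0\}$ given by \hyperref[(G2)]{(G2)} supplies a positive curvature contribution to the $D^{2}m$ term, dispersing energy away from the trapped line; this is the key place where the geometric assumptions are quantitatively used. The remaining longitudinal contribution, supported in a one-dimensional tubular neighbourhood of $\mathcal{R}$, can be absorbed into a Strichartz-bounded remainder. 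The resulting spacetime estimate combines with the Strichartz theory to give a global $L^{q}_{t}L^{r}_{x}$ control on $u$, from which scattering in $H^{1}$ follows by Nakanishi's argument \cite{MR1726753}. The single hardest step is certainly this Morawetz estimate near the trapped ray, where the interplay between \hyperref[(G2)]{(G2)} and \hyperref[(G3)]{(G3)} must be made fully quantitative uniformly along $\mathcal{R}$.
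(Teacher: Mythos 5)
Your proposal diverges from the paper in two structural ways, and both lead to genuine gaps.

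First, the paper does not run a direct Nakanishi-style bootstrap. It follows the Kenig--Merle concentration-compactness-and-rigidity scheme: Section~3 constructs a minimal non-scattering solution $u_c$ whose orbit $\{u_c(t)\}$ is precompact in $H^1$, and only then is the Morawetz identity applied. The compactness is used \emph{essentially}: it is what gives the uniform-in-time tail bounds $\sup_t\Vert u\Vert_{L^2\cap L^{2^\star}\cap L^{\alpha+2}(|x|\geq R)}=\epsilon(R)\to 0$, and without them the error terms in the virial computation (the $\Delta^2\chi$, the cut-off, and the far-field part of $\nabla\chi\cdot\nabla V$) cannot be made small. Your step (iii), ``absorb the longitudinal contribution into a Strichartz-bounded remainder,'' is the place where this would break down: a Morawetz identity produces a space-time integral with an unsigned piece, not a Strichartz norm, and for a general (non-compact-flow) solution there is no mechanism to make that unsigned piece small. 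You need the critical element's compactness before the weight $\chi_c$ can be sent to infinity and the bad set can be shrunk.

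Second, and more to the point geometrically, your multiplier $m=m_1+m_2$ with $m_j$ a smoothed $|x-a_j|$ is not the one that works. The paper takes $\chi_c(x)=(|x-\boldsymbol{c}|+|x+\boldsymbol{c}|)\psi(x/(c/4))$ with $\pm\boldsymbol{c}\in\mathcal{R}$ and, crucially, lets $c\to\infty$. Near the trapped segment the cross terms $\nabla m_1\cdot\nabla V_2$ and $\nabla m_2\cdot\nabla V_1$ carry the wrong sign, and neither the decay hypothesis nor convexity makes them small at fixed centers: they are $O(1)$ there. The paper's key observation (Lemma~\ref{lem:poids}) is that, after the expansion $|x\pm\boldsymbol{c}|=c\pm x_1+|\tilde{x}|^2/(2c)+O(R^4/c^3)$ valid only when $|x|\leq R\ll c$, the vector $\nabla\chi_c$ is essentially $2c(0,\tilde{x})/(|x-\boldsymbol{c}||x+\boldsymbol{c}|)$, i.e.\ purely transverse to $\mathcal{R}$, and the uniform strict convexity of the level surfaces in the non-star-shaped region forces $(0,\tilde{x})\cdot n(x)\geq 0$ there up to $O(R^4/c^4)$. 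That quantitative geometric lemma, together with the choice $R(c)=c^{1/\beta}$ and $\beta>4/3$, is exactly what makes the wrong-signed contribution $o(1/c)$ and closes the argument. You have correctly identified that (G2) and (G3) must enter through the transverse curvature near $\mathcal{R}$, but without the limiting multiplier and the explicit $O(R^4/c^4)$ estimate your scheme has no way to quantify this smallness, and without concentration-compactness there is no compact object on which to localize it.
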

As in the aforementioned papers, we use the strategy of concentration,
compactness and rigidity first introduced by Kenig and Merle in \cite{MR2257393}:
assuming that there exists a finite energy above which solutions do
not scatter, one constructs a compact-flow solution and eliminates
it. Notice that in the case of a repulsive potential, this last rigidity
part is immediate by classical Morawetz estimates. It will be here
the main difficulty to overcome and the novelty of this note. After
some preliminaries, we construct a critical solution in the second
section, following \cite{AsymptAn} and generalizing it to any spatial
dimension. In the last section, we eliminate it using a family of
Morawetz mulipliers for which the gradient almost vanishes on the trapped
trajectory.

\begin{rem}
We assume that $d\neq2$ because our proof relies on endpoint Strichartz
estimates that are not true in dimension two, and the convexity assumption
we make on the potentials have no sense in the one dimensional case.
\end{rem}
\begin{rem}
The first two sections of this paper generalize in particular the
one-dimensional result of \cite{AsymptAn}, to any spatial dimension
$d\geq3$.
\end{rem}
\begin{rem}
As mentionned earlier, the geometrical framework (G1)-(G2)-(G3) is
in many aspects the potential-analog of the homogeneous problem outside
two strictly convex obstacles. This is the subject of a work in progress
\cite{LafLaurent}. A rigidity argument in the particular case of
two balls for the energy critical wave equation can be found in \cite{Waves}. 
\end{rem}
\begin{rem}
It is straightforward from the last section that the result is still
valid for an arbitrary finite sum of convex repulsive potentials $V=V_{1}+\cdots+V_{N}$
for which all trapped trajectories are included in the same line.
However, we present the proof for only two potentials in the seek
of simplicity.
\end{rem}

\section{Preliminaries}

\subsection{Usefull exponents}

From now on, we will fix the three following Strichartz exponents
\[
r=\alpha+2,\;q=\frac{2\alpha(\alpha+2)}{d\alpha^{2}-(d-2)\alpha-4},\;p=\frac{2(\alpha+2)}{4-(d-2)\alpha}.
\]
Moreover, let $\eta$ be the conjugate of the critical exponent $2^{\star}$:
\begin{equation}
\frac{1}{2^{\star}}+\frac{1}{\eta}=1.\label{eq:eta0}
\end{equation}
Notice, for the sequel, the following two identities
\begin{equation}
\frac{2}{d}+\frac{1}{2^{\star}}=\frac{1}{\eta},\label{eq:eta}
\end{equation}
and
\begin{equation}
\frac{2}{d}+\frac{2}{2^{\star}}=1.\label{eq:delta}
\end{equation}
Finally, let $\gamma$ be such that $(\gamma,\eta')$ follows the
admissibility condition of Theorem 1.4 of Foshi's inhomogeneous Strichartz
estimates \cite{MMR2134950}. Note that, in the intercritical regime
(\ref{eq:intercrit}), all these exponents are well defined and larger
than one.

\subsection{Strichartz estimates}

Let us recall that $e^{-it(-\Delta+V)}$ verifies the pointwise dispersive
estimates (\ref{eq:disp}), by \cite{MR2096737} in dimension $d=3$
for $\beta\geq2$, or by assumption in other cases. Interpolating
it with the mass conservation law, we obtain immediatly for all $a\in[2,\infty]$
\begin{equation}
\Vert e^{it(-\Delta+V)}\psi\Vert_{L^{a}}\lesssim\frac{1}{|t|^{\frac{d}{2}(\frac{1}{a'}-\frac{1}{a})}}\Vert\psi\Vert_{L^{a'}}.\label{eq:dispest}
\end{equation}
Moreover, it leads by the classical $TT^{\star}$ method (see for
example \cite{MR1646048}) to the Strichartz estimates
\begin{equation}
\Vert e^{-it(-\Delta+V)}\varphi\Vert_{L^{q_{1}}L^{r_{1}}}+\Vert\int_{0}^{t}e^{-i(t-s)(-\Delta+V)}F(s)ds\Vert_{L^{q_{2}}L^{r_{2}}}\lesssim\Vert\varphi\Vert_{L^{2}}+\Vert F\Vert_{L^{q_{3}'}L^{r_{3}'}}\label{eq:adst}
\end{equation}
for all pairs $(q_{i},r_{i})$ satisfying the admissibility condition
\[
\frac{2}{q_{i}}+\frac{d}{r_{i}}=\frac{d}{2},\ (q_{i},r_{i},d)\neq(2,\infty,2).
\]
We will use moreover the following Strichartz estimates associated
to non admissible pairs:
\begin{prop}[Strichartz estimates]
\label{strichartz}For all $\varphi\in H^{1}$, all $F\in L^{q'}L^{r'}$,
all $G\in L^{q'}L^{r'}$ and all $H\in L^{\gamma'}L^{\eta}$
\begin{equation}
\Vert e^{-it(-\Delta+V)}\varphi\Vert_{L^{p}L^{r}}\lesssim\Vert\varphi\Vert_{H^{1}}\label{eq:st2}
\end{equation}
\begin{equation}
\Vert\int_{0}^{t}e^{-i(t-s)(-\Delta+V)}F(s)ds\Vert_{L^{\alpha}L^{\infty}}\lesssim\Vert F\Vert_{L^{q'}L^{r'}}\label{eq:st3}
\end{equation}

\begin{equation}
\Vert\int_{0}^{t}e^{-i(t-s)(-\Delta+V)}G(s)ds\Vert_{L^{p}L^{r}}\lesssim\Vert G\Vert_{L^{q'}L^{r'}}\label{eq:st4}
\end{equation}

\begin{equation}
\Vert\int_{0}^{t}e^{-i(t-s)(-\Delta+V)}H(s)ds\Vert_{L^{p}L^{r}}\lesssim\Vert H\Vert_{L^{\gamma'}L^{\eta}}.\label{eq:st4-1}
\end{equation}
\end{prop}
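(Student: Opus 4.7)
The plan is to reduce all four inequalities to the admissible Strichartz estimates (\ref{eq:adst}), combining fractional Sobolev embeddings in space with Foschi's inhomogeneous Strichartz theorem \cite{MMR2134950} for the last one. The admissible estimates themselves come from the dispersive bound (\ref{eq:dispest}) by the standard $TT^{\star}$/Keel--Tao machinery (as in \cite{MR1646048}); self-adjointness of $-\Delta+V$, ensured by positivity and decay of $V$, makes the propagator unitary on $L^{2}$, so these estimates hold globally.

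For (\ref{eq:st2}), the scaling defect $s:=\tfrac{d}{2}-\tfrac{d}{r}-\tfrac{2}{p}$ lies in $[0,1)$ in the intercritical regime. I would pick $r_{0}$ so that $(p,r_{0})$ is admissible, equivalently $\tfrac{1}{r_{0}}=\tfrac{1}{r}+\tfrac{s}{d}$, and invoke the fractional Sobolev embedding $W^{s,r_{0}}(\mathbb{R}^{d})\hookrightarrow L^{r}(\mathbb{R}^{d})$ together with the commutation of $(-\Delta+V)^{s/2}$ with the propagator and the admissible bound for $(p,r_{0})$:
\[
\Vert e^{-it(-\Delta+V)}\varphi\Vert_{L^{p}L^{r}}\lesssim\Vert(-\Delta+V)^{s/2}e^{-it(-\Delta+V)}\varphi\Vert_{L^{p}L^{r_{0}}}\lesssim\Vert(-\Delta+V)^{s/2}\varphi\Vert_{L^{2}}\lesssim\Vert\varphi\Vert_{H^{1}},
\]
the last step using $s\leq1$ and the norm equivalence $\Vert(-\Delta+V)^{s/2}\cdot\Vert_{L^{2}}\sim\Vert\cdot\Vert_{H^{s}}$, valid under our positivity/decay assumptions on $V$. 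Estimates (\ref{eq:st3}) and (\ref{eq:st4}) are obtained by applying the same scheme to the Duhamel operator, using the Christ--Kiselev lemma to restrict to the retarded region $\{s<t\}$ and passing the fractional derivative onto the source term by self-adjointness; for (\ref{eq:st3}) one picks $(\alpha,b)$ admissible and uses the embedding $W^{s_{0},b}\hookrightarrow L^{\infty}$ with $s_{0}>d/b$, while (\ref{eq:st4}) reuses the same $(p,r_{0})$ as in (\ref{eq:st2}).

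Finally, (\ref{eq:st4-1}) is an immediate application of Foschi's theorem from \cite{MMR2134950}: the pair $(\gamma,\eta')$ was defined exactly so that, together with $(p,r)$, it satisfies the scaling and acceptability hypotheses of Theorem~1.4 there. The main work will be the bookkeeping---verifying that the auxiliary Sobolev exponents $r_{0},b$ remain in $[2,2^{\star}]$ throughout the intercritical window, and that the requisite fractional Sobolev inequality for $(-\Delta+V)^{s/2}$ is available---both being standard consequences of the heat-kernel estimates for $e^{-t(-\Delta+V)}$ granted by the hypotheses on $V$.
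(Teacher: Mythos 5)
For \eqref{eq:st2} your approach coincides with the paper's: apply admissible Strichartz with time exponent $p$ and then a (fractional) Sobolev embedding in space. The only thing worth flagging is a subtlety that the paper also glosses over: to commute derivatives with $e^{-it(-\Delta+V)}$ one must replace $\langle\nabla\rangle^{s}$ by $(-\Delta+V)^{s/2}$, and one then needs a norm equivalence $\Vert(-\Delta+V)^{s/2}f\Vert_{L^{r_{0}}}\sim\Vert\langle\nabla\rangle^{s}f\Vert_{L^{r_{0}}}$ at the $L^{r_{0}}$ level, not merely at the $L^{2}$ level established in the paper. You should say this requires more than the $H^{1}$-equivalence. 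For \eqref{eq:st4-1} you agree with the paper in quoting Foschi's Theorem~1.4.

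For \eqref{eq:st3} and \eqref{eq:st4}, however, your scheme has a genuine gap. You propose to establish these by admissible Strichartz plus Sobolev embedding on both sides, passing a fractional derivative onto the source. Concretely that would give a chain of the type
\[
\Big\Vert\int_{0}^{t}e^{-i(t-s)A}G\,ds\Big\Vert_{L^{p}L^{r}}
\lesssim\Big\Vert\Lambda^{s}\int_{0}^{t}e^{-i(t-s)A}G\,ds\Big\Vert_{L^{p}L^{r_{0}}}
\lesssim\Vert\Lambda^{s}G\Vert_{L^{q'}L^{\tilde{r}'}},
\]
with $(p,r_{0})$ and $(q,\tilde{r})$ admissible, and you would then need to control the last term by $\Vert G\Vert_{L^{q'}L^{r'}}$. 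But that requires $L^{r'}\hookrightarrow W^{s,\tilde{r}'}$, an embedding which \emph{gains} $s>0$ derivatives, and no such embedding exists: the source $G$ (respectively $F$ in \eqref{eq:st3}) is only assumed to lie in a Lebesgue space, with no extra regularity to spare. Christ--Kiselev does not rescue this either, since it only converts non-retarded to retarded estimates and does not manufacture derivatives. This is precisely why the paper does \emph{not} deduce \eqref{eq:st3}--\eqref{eq:st4} from the admissible estimates by Sobolev: instead it invokes genuinely non-admissible inhomogeneous Strichartz results, namely Lemma~2.1 of Cazenave--Weissler \cite{MR1171761} for \eqref{eq:st4} and Foschi's Theorem~1.4 \cite{MMR2134950} for \eqref{eq:st3}, whose proofs apply the dispersive bound and Hardy--Littlewood--Sobolev in time directly to the Duhamel kernel, bypassing the need for any spatial derivative on the source. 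Your proof should be reorganized around those references rather than around admissible Strichartz plus Sobolev.
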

\begin{proof}
The estimate (\ref{eq:st2}) follows from admissible Strichartz estimate
\[
\Vert e^{-it(-\Delta+V)}\varphi\Vert_{L^{p}L^{\frac{2dp}{dp-4}}}\lesssim\Vert\varphi\Vert_{L^{2}}
\]
together with a Sobolev embedding. The estimate (\ref{eq:st4}) is
contained in Lemma 2.1 of \cite{MR1171761}. Finally, (\ref{eq:st3})
and (\ref{eq:st4-1}) enters on the frame of the non-admissible inhomogheneous
Strichartz estimates of Theorem 1.4 of Foschi's paper \cite{MMR2134950}.
\end{proof}

\subsection{Perturbative results}

The three following classical perturbative results, follow immediatly
from the previous Strichartz inequalities with exact same proof as
in \cite{AsymptAn}.
\begin{prop}
\label{pert1}Let $u\in C(H^{1})$ be a solution of (\ref{eq:nlsv}).
If $u\in L^{p}L^{r}$, then $u$ scatters in $H^{1}$. 
\end{prop}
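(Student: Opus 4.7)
The strategy is a standard perturbative bootstrap followed by the construction of a scattering state via Duhamel's formula, exactly as in \cite{AsymptAn}.

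\textbf{Step 1 (Strichartz bootstrap on short intervals).} The finiteness of $\|u\|_{L^p L^r}$ allows one to partition $\mathbb{R}$ into finitely many intervals $I_j=[t_j,t_{j+1}]$ on which $\|u\|_{L^p(I_j)L^r}\leq\varepsilon$, for any prescribed $\varepsilon>0$. On each $I_j$, I would apply the Duhamel formula
\[
u(t)=e^{-i(t-t_j)(-\Delta+V)}u(t_j)-i\int_{t_j}^{t}e^{-i(t-s)(-\Delta+V)}(|u|^{\alpha}u)(s)\,ds,
\]
together with the Strichartz estimates \eqref{eq:st2}--\eqref{eq:st4} of Proposition~\ref{strichartz} (and the admissible estimate \eqref{eq:adst} applied to $\nabla u$, using the Kato-type pointwise bound $|\nabla(|u|^{\alpha}u)|\lesssim|u|^{\alpha}|\nabla u|$). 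The exponents $p,q,r$ defined in Section~2.1 are chosen precisely so that, with Hölder's inequality in time and space and the Sobolev embedding $H^{1}\hookrightarrow L^{2^{\star}}$, one obtains estimates of the form
\[
\||u|^{\alpha}u\|_{L^{q'}(I_j)L^{r'}}+\|\nabla(|u|^{\alpha}u)\|_{L^{q'}(I_j)L^{r'}}\lesssim\|u\|_{L^p(I_j)L^r}^{\alpha}\bigl(\|u\|_{X(I_j)}+\|\nabla u\|_{X(I_j)}\bigr),
\]
for an admissible Strichartz space $X$. For $\varepsilon$ small enough the factor $\varepsilon^{\alpha}$ is absorbed, closing the bootstrap on $I_j$ and giving uniform control of $u$ and $\nabla u$ in all relevant Strichartz norms. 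Summing over the finitely many intervals produces global bounds, and in particular
\[
\||u|^{\alpha}u\|_{L^{q'}(\mathbb{R})L^{r'}}+\|\nabla(|u|^{\alpha}u)\|_{L^{q'}(\mathbb{R})L^{r'}}<+\infty.
\]

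\textbf{Step 2 (scattering for the perturbed flow).} Set
\[
\psi_{+}^{V}:=\varphi-i\int_{0}^{+\infty}e^{is(-\Delta+V)}(|u|^{\alpha}u)(s)\,ds.
\]
The dual of \eqref{eq:adst}, applied to the tail $[T,+\infty)$, gives
\[
\left\Vert \int_{T}^{+\infty}e^{is(-\Delta+V)}(|u|^{\alpha}u)(s)\,ds\right\Vert _{H^{1}}\lesssim\||u|^{\alpha}u\|_{L^{q'}([T,\infty))L^{r'}}+\|\nabla(|u|^{\alpha}u)\|_{L^{q'}([T,\infty))L^{r'}},
\]
which tends to zero as $T\to+\infty$ by Step~1. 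Thus $\psi_{+}^{V}\in H^{1}$, and the same estimate on $[t,+\infty)$ yields $\|u(t)-e^{-it(-\Delta+V)}\psi_{+}^{V}\|_{H^{1}}\to0$.

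\textbf{Step 3 (passage to the free flow).} The decay condition \eqref{eq:decay_assump} makes $V$ short-range in the classical sense, so Cook's method gives the existence of the strong $H^{1}$-limit $\psi_{+}:=\lim_{t\to+\infty}e^{it\Delta}e^{-it(-\Delta+V)}\psi_{+}^{V}$, i.e.\ the linear wave operator. Then $\|e^{-it(-\Delta+V)}\psi_{+}^{V}-e^{-it\Delta}\psi_{+}\|_{H^{1}}\to0$, and combining with Step~2 one obtains $\|u(t)-e^{-it\Delta}\psi_{+}\|_{H^{1}}\to0$. The case $t\to-\infty$ is symmetric.

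The main technical point is the exponent bookkeeping in Step~1: the mix of non-admissible pairs $(p,r)$, $(\alpha,\infty)$ and $(\gamma',\eta)$ in Proposition~\ref{strichartz} is tailored exactly so that Hölder distributes the $\alpha+1$ copies of $u$ in the nonlinearity between the small $L^{p}L^{r}$-norm on $I_{j}$ and a remaining admissible Strichartz norm of $u$ (or $\nabla u$), allowing the bootstrap to close; everything else is routine once this is set up.
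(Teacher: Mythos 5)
Your Steps 1 and 2 are exactly what the paper appeals to: the paper only states that the proof is "the exact same as in \cite{AsymptAn}", which is indeed the argument you reconstruct --- split $\mathbb{R}$ into finitely many intervals on which $\|u\|_{L^p(I_j)L^r}$ is small, run a Strichartz bootstrap via Proposition~\ref{strichartz} applied to $u$ and $\nabla u$ (using $|\nabla(|u|^{\alpha}u)|\lesssim|u|^{\alpha}|\nabla u|$ and energy conservation), and then show the Duhamel tail is Cauchy in $H^1$. So your proposal follows the same route, and the exponent bookkeeping you flag is, as you say, the only point requiring checking.

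Two small remarks on Step~3. First, what Steps 1--2 deliver directly is scattering to the \emph{perturbed} linear flow $e^{it(\Delta-V)}$; this is the form used throughout the paper (cf.\ Proposition~\ref{prop:lin_scatt}), so Step~3 is an optional add-on rather than part of the intended proof. Second, as stated Step~3 is slightly off: the limit $\lim_{t\to+\infty}e^{it\Delta}e^{-it(-\Delta+V)}\psi$ is a Cook-type argument in which one integrates $\|Ve^{-is(-\Delta+V)}\psi\|_{L^2}$ in time, so the decay of $e^{-is(-\Delta+V)}$ --- i.e.\ the assumed \emph{perturbed} dispersive estimate~\eqref{eq:disp} --- is what makes the integrand summable, not merely the fact that $V$ is "short-range in the classical sense" together with free dispersion. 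Once you invoke~\eqref{eq:disp} (which is a standing assumption of the paper), this step closes.
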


\begin{prop}
\label{pert2}There exists $\epsilon_{0}>0$, such that, for every
data $\varphi\in H^{1}$ such that $\Vert\varphi\Vert_{H^{1}}\leq\epsilon_{0}$,
the corresponding maximal solution of (\ref{eq:nlsv}) scatters in
$H^{1}$.
\end{prop}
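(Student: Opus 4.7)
The plan is a small-data contraction argument in a Strichartz-type space, followed by Proposition~\ref{pert1} to promote a global-in-time $L^{p}L^{r}$ bound into $H^{1}$-scattering.

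Concretely, I would work from the Duhamel representation $u(t) = e^{-it(-\Delta+V)}\varphi - i\int_{0}^{t} e^{-i(t-s)(-\Delta+V)}|u|^{\alpha}u(s)\,ds$ and seek a global bound on $\|u\|_{L^{p}L^{r}}$. The linear piece is controlled by (\ref{eq:st2}) by $\|\varphi\|_{H^{1}}\le\epsilon_{0}$. For the Duhamel piece, (\ref{eq:st4}) reduces the task to bounding $\||u|^{\alpha}u\|_{L^{q'}L^{r'}}$. Since $r=\alpha+2$, Hölder in the spatial variable is automatic and gives $\||u(t)|^{\alpha}u(t)\|_{L^{r'}} = \|u(t)\|_{L^{r}}^{\alpha+1}$. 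In time, a direct check in the intercritical window (\ref{eq:intercrit}) shows $(\alpha+1)q' > p$, so I would interpolate $L^{(\alpha+1)q'}_{t}L^{r}_{x}$ between $L^{p}_{t}L^{r}_{x}$ and $L^{\infty}_{t}L^{r}_{x}$, the latter being controlled by $\|u\|_{L^{\infty}H^{1}}$ via Sobolev embedding. Because $V\ge 0$, the positive conserved energy dominates $\|u\|_{L^{\infty}H^{1}}^{2}$ and is hence itself controlled by $\|\varphi\|_{H^{1}}\le\epsilon_{0}$.

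This delivers a super-linear bootstrap of the form $\|u\|_{L^{p}L^{r}} \le C\epsilon_{0} + C\epsilon_{0}^{\theta}\|u\|_{L^{p}L^{r}}^{1+\delta}$ with $\theta,\delta>0$, which closes by continuity as soon as $\epsilon_{0}$ is small enough, giving a global bound on $\|u\|_{L^{p}L^{r}}$. To propagate the $H^{1}$-regularity, I would run the analogous scheme on $\sqrt{-\Delta+V}\,u$, using the norm equivalence $\|\langle\nabla\rangle u\|_{L^{2}} \sim \|\langle\sqrt{-\Delta+V}\rangle u\|_{L^{2}}$ (valid for positive $V$ decaying as in (\ref{eq:decay_assump})) together with the Leibniz rule $\nabla(|u|^{\alpha}u)\sim|u|^{\alpha}\nabla u$, in an admissible Strichartz pair; the contraction estimate comes from $\bigl||u|^{\alpha}u-|v|^{\alpha}v\bigr|\lesssim(|u|^{\alpha}+|v|^{\alpha})|u-v|$ applied in the same norms. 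Once $\|u\|_{L^{p}L^{r}}$ is globally finite, Proposition~\ref{pert1} delivers $H^{1}$-scattering.

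The only genuinely quantitative point is the book-keeping of exponents required to keep the bootstrap strictly super-linear, which in $d\ge 3$ is clean; nothing new arises in higher dimension, and the overall proof transcribes verbatim that of \cite{AsymptAn}.
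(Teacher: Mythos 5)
Your overall architecture is the right one: Duhamel, the non-admissible Strichartz estimates (\ref{eq:st2})--(\ref{eq:st4}) to close a bound on $\|u\|_{L^pL^r}$ by a bootstrap, then Proposition~\ref{pert1}. That is indeed how this type of small-data statement is proved. However, the one concrete quantitative step you invoke — ``a direct check in the intercritical window shows $(\alpha+1)q' > p$'' — is false in the regime the paper works in, and the interpolation you build on it does not exist.

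To see the failure, look at the mass-critical endpoint $\alpha\to(4/d)^{+}$. The paper's exponent $q=\frac{2\alpha(\alpha+2)}{d\alpha^{2}-(d-2)\alpha-4}$ blows up there, so $q'\to 1$ and $(\alpha+1)q'\to\frac{d+4}{d}$, while $p=\frac{2(\alpha+2)}{4-(d-2)\alpha}\to\frac{d+2}{2}$. Thus $(\alpha+1)q'<p$ as soon as $\frac{d+4}{d}<\frac{d+2}{2}$, that is $d^{2}>8$, i.e.\ for every dimension $d\ge 3$ this paper treats. (A numerical check: for $d=3$, $\alpha=7/5$ one finds $(\alpha+1)q'=\frac{1428}{565}\approx 2.53 < \frac{34}{13}\approx 2.62 = p$.) The same failure reappears near the energy-critical endpoint, where $p\to\infty$ while $(\alpha+1)q'$ remains finite. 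Since the spatial Hölder is rigid — $(\alpha+1)r'=r$, so the only way to estimate $\||u|^{\alpha}u\|_{L^{q'}L^{r'}}$ through $L^{r}_{x}$ is $\|u\|_{L^{(\alpha+1)q'}_{t}L^{r}_{x}}^{\alpha+1}$ — you genuinely need $(\alpha+1)q'\ge p$ to interpolate against $L^{\infty}_{t}L^{r}_{x}$; when it fails the interpolation exponent $\theta=\frac{p}{(\alpha+1)q'}$ exceeds $1$ and there is nothing to interpolate. Interestingly, the inequality $(\alpha+1)q'>p$ \emph{does} hold throughout the intercritical window in $d=1$ (at $\alpha=4/d$ one gets $\frac{d+4}{d}>\frac{d+2}{2}$ precisely because $d^{2}<8$), so your claim that ``nothing new arises in higher dimension'' and the argument ``transcribes verbatim'' from the one-dimensional \cite{AsymptAn} is exactly where the slip is: it is the passage from $d=1$ to $d\ge 3$ that breaks this particular exponent identity.

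To close the bootstrap for the full intercritical window $d\ge 3$ you cannot work with the single norm $L^{p}L^{r}$ interpolated against $L^{\infty}L^{r}$: you must record at least one further Strichartz norm with a smaller time exponent — the admissible $L^{q_{0}}L^{r}$ with $q_{0}=\frac{4(\alpha+2)}{d\alpha}$, or the $L^{\alpha}L^{\infty}$ norm furnished by (\ref{eq:st3}) — and run the contraction in the corresponding product space, propagating and using all these norms simultaneously. That is a genuine extra layer of bookkeeping, not a cosmetic one, and your proposal does not supply it.
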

\begin{prop}
\label{pert3}For every $M>0$ there exists $\epsilon>0$ and $C>0$
such that the following occurs. Let $v\in C(H^{1})\cap L^{p}L^{r}$
be a solution of the following integral equation with source term
$e(t,x)$ 
\[
v(t)=e^{-it(\Delta-V)}\varphi-i\int_{0}^{t}e^{-i(t-s)(\Delta-V)}(v(s)|v(s)|^{\alpha})ds+e(t)
\]
 with $\Vert v\Vert_{L^{p}L^{r}}<M$ and $\Vert e\Vert_{L^{p}L^{r}}<\epsilon$.
Assume moreover that $\varphi_{0}\in H^{1}$ is such that $\Vert e^{-it(\Delta-V)}\varphi_{0}\Vert_{L^{p}L^{r}}<\epsilon$
. Then, the solution $u\in C(H^{1})$ to (\ref{eq:nlsv}) with initial
condition $\varphi+\varphi_{0}$ satisfies
\[
u\in L^{p}L^{r},\quad\Vert u-v\Vert_{L^{p}L^{r}}<C.
\]
 
\end{prop}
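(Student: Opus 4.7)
The plan is to set $w:=u-v$ and compare $u$ and $v$ through their Duhamel formulations. Subtracting the equation satisfied by $v$ from Duhamel's formula for $u$, one obtains
\[
w(t)=e^{-it(-\Delta+V)}\varphi_{0}-i\int_{0}^{t}e^{-i(t-s)(-\Delta+V)}\bigl(u|u|^{\alpha}-v|v|^{\alpha}\bigr)(s)\,ds-e(t),
\]
so that the $L^{p}L^{r}$-norm of $w$ can be controlled, via the inhomogeneous Strichartz inequality \eqref{eq:st4} together with the hypotheses on $\varphi_{0}$ and $e$, by the $L^{q'}L^{r'}$-norm of the nonlinear difference $u|u|^{\alpha}-v|v|^{\alpha}$ plus a term bounded by $\epsilon$.

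Since $\|v\|_{L^{p}L^{r}}<M$ is only \emph{bounded} and not small, I would first partition the time axis into $N=N(M,\delta)$ consecutive subintervals $I_{1},\dots,I_{N}$ on each of which
\[
\|v\|_{L^{p}(I_{j})L^{r}}<\delta,
\]
for some small $\delta>0$ to be chosen later. Combining the pointwise estimate $\bigl||u|^{\alpha}u-|v|^{\alpha}v\bigr|\lesssim\bigl(|v|^{\alpha}+|w|^{\alpha}\bigr)|w|$ with Hölder's inequality and the scaling relations between $p$, $q$ and $r$ yields
\[
\|u|u|^{\alpha}-v|v|^{\alpha}\|_{L^{q'}(I_{j})L^{r'}}\lesssim\bigl(\delta^{\alpha}+\|w\|_{L^{p}(I_{j})L^{r}}^{\alpha}\bigr)\|w\|_{L^{p}(I_{j})L^{r}}
\]
on each $I_{j}$. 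Plugging this into \eqref{eq:st4} produces an inequality of the form $X_{j}\leq C_{0}(\epsilon+\Lambda_{j}+\delta^{\alpha}X_{j}+X_{j}^{\alpha+1})$, where $X_{j}:=\|w\|_{L^{p}(I_{j})L^{r}}$ and $\Lambda_{j}$ gathers the contribution on $I_{j}$ of the Duhamel integral over the past intervals $I_{1}\cup\dots\cup I_{j-1}$.

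Choosing $\delta$ small enough to absorb the linear term $C_{0}\delta^{\alpha}X_{j}$, a standard continuity argument starting from $t=0$ (where the $L^{p}L^{r}$-norm of $w$ on small neighborhoods of the origin is controlled by the linear evolution of $\varphi_{0}$ and by $e$, both of which are smaller than $\epsilon$) closes the bootstrap and yields $X_{1}\lesssim\epsilon$. Propagating the bound inductively, $\Lambda_{j+1}$ is controlled by applying the admissible Strichartz inequalities \eqref{eq:adst} to the nonlinear source on $I_{1}\cup\dots\cup I_{j}$, which gives a bound in terms of $X_{1}+\cdots+X_{j}$. Iterating over the $N=N(M)$ subintervals, the accumulated constants produce the claimed bound $\|u-v\|_{L^{p}L^{r}}<C=C(M)$.

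The main obstacle is combinatorial rather than analytical: the smallness of $\epsilon$ may be lost by a fixed multiplicative constant at every step, and ensuring that the final $C$ depends only on $M$ requires keeping track of this geometric growth along the $N=N(M)$ iterations. This is the standard Kenig--Merle stability scheme, already worked out in the one-dimensional setting in \cite{AsymptAn}; the only new ingredient in dimension $d\geq3$ is that the higher-dimensional Strichartz inequalities of Proposition~\ref{strichartz} make the same argument applicable verbatim.
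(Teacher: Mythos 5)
Your high-level plan is the standard Kenig--Merle long-time perturbation scheme (subtract Duhamel formulas, partition time into intervals where $\|v\|_{L^p L^r}$ is small, bootstrap on each interval and iterate), and this is indeed what the paper relies on: it simply cites \cite{AsymptAn} and says the proof is identical with the Strichartz inequalities of Proposition~\ref{strichartz} replacing the one-dimensional ones. So the structure you describe is the intended one.

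There is, however, a genuine gap in the central estimate you write. The claimed inequality
\[
\|u|u|^{\alpha}-v|v|^{\alpha}\|_{L^{q'}(I_{j})L^{r'}}\lesssim\bigl(\delta^{\alpha}+\|w\|_{L^{p}(I_{j})L^{r}}^{\alpha}\bigr)\|w\|_{L^{p}(I_{j})L^{r}}
\]
requires, in the time variable, the H\"older identity $\tfrac{\alpha+1}{p}=\tfrac{1}{q'}$, and this fails for the exponents defined in the paper. For instance with $d=3$, $\alpha=2$ one finds $p=4$, $q=8/3$, so $\tfrac{1}{q'}=\tfrac{5}{8}$ while $\tfrac{\alpha+1}{p}=\tfrac{3}{4}$; a direct computation shows that the sign of $\tfrac{1}{q'}-\tfrac{\alpha+1}{p}$ changes across the intercritical range, so no exponent gain from the finite length of $I_j$ rescues it uniformly (and the outer subintervals are semi-infinite anyway). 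The spatial H\"older $\tfrac{1}{r'}=\tfrac{\alpha+1}{r}$ (i.e.\ $r=\alpha+2$) is fine; it is the temporal one that does not close. The correct version places $v$ in $L^{a}_t L^{r}_x$ with $\tfrac{\alpha}{a}=\tfrac{1}{q'}-\tfrac{1}{p}$, and one then controls $\|v\|_{L^{a}(I_j)L^{r}}$ by interpolating $\|v\|_{L^{p}(I_j)L^{r}}<\delta$ against a uniform $L^\infty_t H^1$ (hence $L^\infty_t L^{r}$) bound on $v$; the smallness then comes with a power $\delta^{\theta}$ rather than $\delta$, which is harmless. You should also justify the $L^\infty_t H^1$ bound for $v$ (it is only assumed to be $C(H^1)\cap L^pL^r$), typically by a preliminary Strichartz bootstrap on $v$. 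With these corrections the rest of your scheme --- absorbing the small coefficient, running the continuity argument, and iterating over the $N(M)$ subintervals while tracking the geometric growth of constants --- goes through and matches what \cite{AsymptAn} does.
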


\section{Construction of a critical solution}

The aim of this section is to extend the construction of a critical
element of \cite{AsymptAn} to any dimension $d\neq2$ -- no repulsivity
assumption is used in this first part of this work. This previous
paper follows itself \cite{MR213495011} which deals with a Dirac
potential, which is more singular but for which explicit formulas
are at hand. More precisely, let 
\begin{multline}
E_{c}=\sup\big\{ E>0\ |\ \forall\varphi\in H^{1},\ E(\varphi)<E\Rightarrow \\ \text{the solution of \ensuremath{(1.1)} with data }\varphi\text{ is in }L^{p}L^{r}\big\} .
\end{multline}
We will prove
\begin{thm}
\label{thm:critsol}If $E_{c}<\infty$, then there exists $\varphi_{c}\in H^{1}$,
$\varphi_{c}\neq0$, such that the corresponding solution $u_{c}$
of (\ref{eq:nlsv}) has a relatively compact flow $\left\{ u_{c}(t),\ t\geq0\right\} $
in $H^{1}$ and does not scatter.
\end{thm}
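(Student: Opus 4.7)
The plan is to apply the by-now standard Kenig--Merle concentration-compactness machinery, adapted to the inhomogeneous flow $e^{-it(-\Delta+V)}$, following the scheme of \cite{AsymptAn} but generalized to arbitrary dimension $d\ge 3$ by means of the Strichartz estimates collected in Proposition \ref{strichartz}. Fix a minimizing sequence: choose data $\varphi_n\in H^1$ with $E(\varphi_n)\to E_c$ and corresponding solutions $u_n$ of (\ref{eq:nlsv}) satisfying $\|u_n\|_{L^p L^r}\to\infty$ (such a sequence exists by definition of $E_c$ and the small-data scattering Proposition \ref{pert2}). Up to translating in time we may assume $\|u_n\|_{L^p L^r((-\infty,0))}$ and $\|u_n\|_{L^p L^r((0,\infty))}$ both diverge.

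The first main step is a linear profile decomposition for $H^1$-bounded sequences under $e^{-it(-\Delta+V)}$: writing
\[
\varphi_n=\sum_{j=1}^{J}e^{is_n^{j}(-\Delta+V)}\psi^{j}(\cdot-x_n^{j})+R_n^{J}
\]
with pairwise orthogonal parameters $(s_n^j,x_n^j)$, Pythagorean splitting of the linear $H^1$ norm and of the energy, and $\limsup_n\|e^{-it(-\Delta+V)}R_n^J\|_{L^pL^r}\to 0$ as $J\to\infty$. This decomposition is proved by iterating extraction of bubbles using the dispersive estimate (\ref{eq:dispest}) and the embedding coming from (\ref{eq:st2}), in the same spirit as in \cite{AsymptAn}; the only genuinely new technical point compared to the one-dimensional case is that for profiles with $|x_n^j|\to\infty$ one must compare $e^{-it(-\Delta+V)}$ to the free flow $e^{it\Delta}$. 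This comparison uses the decay assumption (\ref{eq:decay_assump}): the Duhamel remainder $\int_0^t e^{-i(t-s)(-\Delta+V)}V(\cdot)e^{is\Delta}\psi^{j}(\cdot-x_n^{j})\,ds$ is controlled in $L^p L^r$ by inhomogeneous Strichartz (\ref{eq:st4}) or (\ref{eq:st4-1}) and tends to zero as $|x_n^j|\to\infty$ because $V\psi^j(\cdot-x_n^j)\to 0$ in the relevant dual Strichartz space.

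The second main step is the nonlinear profile decomposition. To each linear profile one associates a nonlinear one: if $x_n^j$ stays bounded, this is the solution $U^j$ of (\ref{eq:nlsv}) with data (or asymptotic state at $s_n^j$) given by $\psi^j$; if $|x_n^j|\to\infty$, the potential becomes asymptotically negligible and $U^j$ solves the translation-invariant defocusing NLS $i\partial_t U+\Delta U=U|U|^{\alpha}$. The key dichotomy is then: if at least two profiles carry positive energy, each has energy strictly below $E_c$, so by induction and the orthogonality of the parameters each nonlinear profile satisfies a uniform $L^p L^r$ bound --- those with bounded $x_n^j$ by definition of $E_c$, those with $|x_n^j|\to\infty$ by Nakanishi's theorem \cite{MR1726753} for the free intercritical NLS. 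A standard almost-orthogonality computation together with the perturbative Proposition \ref{pert3} then gives a uniform bound on $\|u_n\|_{L^p L^r}$, contradicting our minimizing choice. Therefore exactly one profile is present and the remainder vanishes.

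It remains to exclude the escaping case $|x_n^1|\to\infty$ for the surviving profile. If it occurred, the associated nonlinear profile would solve the free NLS and hence scatter by Nakanishi, and Proposition \ref{pert3} would again yield $\sup_n \|u_n\|_{L^pL^r}<\infty$, a contradiction. Thus $x_n^1$ is bounded, $s_n^1$ converges (up to subsequence), and absorbing $s_n^1$ into the profile gives data $\varphi_c\in H^1$ with $E(\varphi_c)=E_c$ whose solution $u_c$ does not belong to $L^p L^r$ on either half-line. Applying the same argument to arbitrary time-translates $u_c(t_n)$ of $u_c$ (which form another critical sequence) shows that for every sequence $t_n\ge 0$ one can extract a subsequence along which $u_c(t_n)$ converges in $H^1$, up to a space translation by some bounded $x_n$; the same escape-to-infinity argument above rules out $|x_n|\to\infty$, so in fact $u_c(t_n)$ converges in $H^1$ itself, giving the desired precompactness of $\{u_c(t):t\ge 0\}$. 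The main obstacle throughout is the loss of space translation invariance: it forces the careful comparison with the free flow described in the first step, and it is precisely where the decay hypothesis (\ref{eq:decay_assump}) is used.
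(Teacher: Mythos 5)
Your proposal is correct and follows essentially the same route as the paper: a linear profile decomposition for $e^{-it(-\Delta+V)}$, a comparison of the escaping profiles with the free flow to associate nonlinear profiles (this is exactly the role of Propositions \ref{prop:decay_lin}--\ref{prop:lin_scatt}, made possible by the decay assumption (\ref{eq:decay_assump})), the Kenig--Merle dichotomy ruling out multiple profiles and escaping single profiles via the perturbation Proposition \ref{pert3}, and finally extraction of precompactness by re-running the argument on time-translates. The only cosmetic difference is that the paper invokes the abstract profile decomposition theorem of Banica--Visciglia and verifies its hypotheses for $A=-\Delta+V$ (Proposition \ref{prop:prof_assumpt}), whereas you describe the underlying bubble-extraction proof directly; these are the same argument.
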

We assume all along this section that $d\geq3$.

\subsection{Profile decomposition}

We first show, with the same method as in \cite{AsymptAn}, extended
to any dimension, that we can use the abstract profile decomposition
obtained by \cite{MR213495011}:
\begin{thm*}[Astract profile decomposition, \cite{MR213495011}]
 Let $A:L^{2}\supset D(A)\rightarrow L^{2}$ be a self adjoint operator
such that:

\begin{itemize}
\item for some positive constants $c,C$ and for all $u\in D(A)$, 
\begin{equation}
c\Vert u\Vert_{H^{1}}^{2}\leq(Au,u)+\Vert u\Vert_{L^{2}}^{2}\leq C\Vert u\Vert_{H^{1}}^{2},\label{eq:ab1}
\end{equation}
\item let $B:D(A)\times D(A)\ni(u,v)\rightarrow(Au,v)+(u,v)_{L^{2}}-(u,v)_{H^{1}}\in\mathbb{C}$.
Then, as $n$ goes to infinity
\begin{equation}
B(\tau_{x_{n}}\psi,\tau_{x_{n}}h_{n})\rightarrow0\quad\forall\psi\in H^{1}\label{eq:ab2}
\end{equation}
as soon as
\[
x_{n}\rightarrow\pm\infty,\quad\sup\Vert h_{n}\Vert_{H^{1}}<\infty
\]
or
\[
x_{n}\rightarrow\bar{x}\in\mathbb{R},\quad h_{n}\underset{H^{1}}{\rightharpoonup}0,
\]
\item let $(t_{n})_{n\geq1}$, $(x_{n})_{n\geq1}$ be sequences of real
numbers, and $\bar{t},\bar{x}\in\mathbb{R}$. Then
\begin{equation}
|t_{n}|\rightarrow\infty\Longrightarrow\Vert e^{it_{n}A}\tau_{x_{n}}\psi\Vert_{L^{p}}\rightarrow0,\quad\forall2<p<\infty,\ \forall\psi\in H^{1}\label{eq:ab3}
\end{equation}
\begin{equation}
t_{n}\rightarrow\bar{t},\ x_{n}\rightarrow\pm\infty\Longrightarrow\forall\psi\in H^{1},\:\exists\varphi\in H^{1},\quad\tau_{-x_{n}}e^{it_{n}A}\tau_{x_{n}}\psi\overset{H^{1}}{\rightarrow}\varphi\label{eq:ab4}
\end{equation}
\begin{equation}
t_{n}\rightarrow\bar{t},\ x_{n}\rightarrow\bar{x}\Longrightarrow\forall\psi\in H^{1},\quad e^{it_{n}A}\tau_{x_{n}}\psi\overset{H^{1}}{\rightarrow}e^{i\bar{t}A}\tau_{\bar{x}}\psi.\label{eq:ab5}
\end{equation}
\end{itemize}
And let $(u_{n})_{n\geq1}$ be a bounded sequence in $H^{1}$. Then,
up to a subsequence, the following decomposition holds
\[
u_{n}=\sum_{j=1}^{J}e^{it_{j}^{n}A}\tau_{x_{n}^{j}}\psi_{j}+R_{n}^{J}\quad\forall J\in\mathbb{N}
\]
where
\[
t_{j}^{n}\in\mathbb{R},\;x_{j}^{n}\in\mathbb{R},\;\psi_{j}\in H^{1}
\]
are such that

\begin{itemize}
\item for any fixed $j$,
\begin{equation}
t_{j}^{n}=0\;\forall n,\quad\text{or}\quad t_{n}^{j}\overset{n\rightarrow\infty}{\rightarrow}\pm\infty\label{eq:dp1}
\end{equation}
\begin{equation}
x_{j}^{n}=0\;\forall n,\quad\text{or\quad}x_{n}^{j}\overset{n\rightarrow\infty}{\rightarrow}\pm\infty,\label{eq:dep2}
\end{equation}
\item orthogonality of the parameters:
\begin{equation}
|t_{j}^{n}-t_{k}^{n}|+|x_{j}^{n}-x_{k}^{n}|\overset{n\rightarrow\infty}{\rightarrow}\infty,\quad\forall j\neq k,\label{eq:dp3}
\end{equation}
\item decay of the reminder:
\begin{equation}
\forall\epsilon>0,\exists J\in\mathbb{N},\quad\limsup_{n\rightarrow\infty}\Vert e^{-itA}R_{n}^{J}\Vert_{L^{\infty}L^{\infty}}\leq\epsilon,\label{eq:dp4}
\end{equation}
 
\item orthogonality of the Hilbert norm:
\begin{equation}
\Vert u_{n}\Vert_{L^{2}}^{2}=\sum_{j=1}^{J}\Vert\psi_{j}\Vert_{L^{2}}^{2}+\Vert R_{n}^{J}\Vert_{L^{2}}^{2}+o_{n}(1),\quad\forall J\in\mathbb{N}\label{eq:dp5}
\end{equation}
\begin{equation}
\Vert u_{n}\Vert_{H}^{2}=\sum_{j=1}^{J}\Vert\tau_{x_{n}^{j}}\psi_{j}\Vert_{H}^{2}+\Vert R_{n}^{J}\Vert_{H}^{2}+o_{n}(1),\quad\forall J\in\mathbb{N}\label{eq:dp6}
\end{equation}
 where $(u,v)_{H}=(Au,v)$, and
\begin{equation}
\Vert u_{n}\Vert_{L^{p}}^{p}=\sum_{j=1}^{J}\Vert e^{it_{j}^{n}A}\tau_{x_{n}^{j}}\psi_{j}\Vert_{L^{p}}^{p}+\Vert R_{n}^{J}\Vert_{L^{p}}^{p}+o_{n}(1),\quad\forall2<p<2^{\star},\quad\forall J\in\mathbb{N}.\label{eq:dp7}
\end{equation}
\end{itemize}
\end{thm*}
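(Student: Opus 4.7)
The plan is to run the Kenig--Merle concentration-compactness scheme using the abstract profile decomposition just recalled, applied to the operator $A=-\Delta+V$. I would first verify the five abstract hypotheses (\ref{eq:ab1})--(\ref{eq:ab5}) for this operator. The norm equivalence (\ref{eq:ab1}) follows from positivity of $V$ together with the bound $\int V|u|^{2}\lesssim\Vert V\Vert_{L^{d/2}}\Vert u\Vert_{L^{2^{\star}}}^{2}\lesssim\Vert u\Vert_{H^{1}}^{2}$ guaranteed by (\ref{eq:decay_assump}) and Sobolev. The bilinear vanishing (\ref{eq:ab2}) follows from the decay of $V$ when $x_{n}\to\pm\infty$ and from the compactness of multiplication by $V\in L^{d/2}$ from $H^{1}$ to $H^{-1}$ when $x_{n}$ stays bounded. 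The three dispersive conditions (\ref{eq:ab3})--(\ref{eq:ab5}) are consequences of the pointwise estimate (\ref{eq:disp}) (interpolated via (\ref{eq:dispest})): the first by a standard density-plus-duality argument, and the last two by comparing $e^{-itA}$ to the free Schrödinger evolution in the limits $x_{n}\to\bar{x}$ and $x_{n}\to\pm\infty$ respectively, using again the decay of $V$.

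Next, I would pick an $H^{1}$-bounded minimizing non-scattering sequence $\varphi_{n}\in H^{1}$, i.e.\ $E(\varphi_{n})\downarrow E_{c}$ with corresponding solutions $u_{n}$ satisfying $\Vert u_{n}\Vert_{L^{p}L^{r}}=\infty$. The profile decomposition then yields profiles $\psi_{j}$, parameters $(t_{j}^{n},x_{j}^{n})$, and remainders $R_{n}^{J}$. To each profile I associate a nonlinear profile $U_{j}$, the solution of (\ref{eq:nlsv}) with $U_{j}(t_{j}^{n})\approx\tau_{x_{n}^{j}}\psi_{j}$: for $t_{j}^{n}\equiv 0$ this is simply the Cauchy problem from $\psi_{j}$; for $t_{j}^{n}\to\pm\infty$ one solves a final-value problem using the Strichartz apparatus of Proposition~\ref{strichartz}; for $x_{n}^{j}\to\pm\infty$ the potential is asymptotically vanishing along the profile, so $U_{j}$ is well-approximated by the corresponding free NLS solution, which scatters by Nakanishi's theorem \cite{MR1726753}.

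The reduction to a single profile goes as follows. Positivity of $V$ combined with the orthogonalities (\ref{eq:dp5})--(\ref{eq:dp7}) yields the energy decoupling $E_{c}=\sum_{j=1}^{J}E(\psi_{j})+\lim_{n}E(R_{n}^{J})+o_{J}(1)$ with each summand non-negative. If every profile had $E(\psi_{j})<E_{c}$, the definition of $E_{c}$ would give a global $L^{p}L^{r}$ bound for each $U_{j}$, and the standard summation argument together with the perturbative Proposition~\ref{pert3} and the smallness (\ref{eq:dp4}) of $e^{-itA}R_{n}^{J}$ would force $u_{n}\in L^{p}L^{r}$ for $n$ large, a contradiction. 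Hence exactly one profile, say $\psi_{1}$, saturates the energy, all others vanish, and $R_{n}^{J}\to 0$ in $H^{1}$. Moreover the parameters $(t_{1}^{n},x_{1}^{n})$ must remain bounded, for otherwise the case analysis above would make $U_{1}$ (and hence $u_{n}$) scatter. One can then take $t_{1}^{n}=x_{1}^{n}=0$ and set $\varphi_{c}:=\psi_{1}$; this is nonzero since $E(\varphi_{c})=E_{c}>0$ by Proposition~\ref{pert2}, and the associated solution $u_{c}$ fails to scatter, for otherwise Proposition~\ref{pert3} would again give scattering of $u_{n}$.

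Relative compactness of $\{u_{c}(t),\ t\geq 0\}$ in $H^{1}$ follows by applying the same extraction to any sequence $(u_{c}(s_{n}))_{n}$: it is again a minimizing non-scattering sequence, so the argument yields a single profile of critical energy with vanishing remainder, whence strong convergence along a subsequence. The main obstacle throughout is the verification of the dispersive conditions (\ref{eq:ab3})--(\ref{eq:ab5}) for $A=-\Delta+V$, and relatedly the approximation of the nonlinear flow of (\ref{eq:nlsv}) by the free one when a profile is translated spatially to infinity; both crucially use the decay assumption (\ref{eq:decay_assump}) and require genuinely extending the one-dimensional approach of \cite{AsymptAn} to $d\geq 3$.
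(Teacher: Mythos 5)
There is a fundamental mismatch between what you have written and what the statement asks you to prove. The statement is the \emph{abstract profile decomposition theorem} itself: given any self-adjoint operator $A$ satisfying the five hypotheses (\ref{eq:ab1})--(\ref{eq:ab5}), every bounded sequence in $H^{1}$ admits, up to a subsequence, the decomposition $u_{n}=\sum_{j=1}^{J}e^{it_{j}^{n}A}\tau_{x_{n}^{j}}\psi_{j}+R_{n}^{J}$ with the dichotomy of parameters (\ref{eq:dp1})--(\ref{eq:dep2}), the orthogonality (\ref{eq:dp3}), the remainder decay (\ref{eq:dp4}), and the Pythagorean expansions (\ref{eq:dp5})--(\ref{eq:dp7}). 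Your proposal never addresses this. You do not construct the profiles $\psi_{j}$ or the parameters $(t_{j}^{n},x_{j}^{n})$, and you prove none of the conclusions (\ref{eq:dp1})--(\ref{eq:dp7}). A proof would have to run the usual extraction scheme: define a defect functional (here the $\limsup$ of $\Vert e^{-itA}R_{n}^{J}\Vert_{L^{\infty}L^{\infty}}$, or an equivalent quantity controlled via (\ref{eq:ab3})), extract each $\psi_{j}$ as a weak $H^{1}$ limit of $\tau_{-x_{n}^{j}}e^{-it_{j}^{n}A}u_{n}$ along suitably chosen parameters, use (\ref{eq:ab4})--(\ref{eq:ab5}) to handle convergent parameter sequences and reduce to the dichotomy (\ref{eq:dp1})--(\ref{eq:dep2}), derive the parameter orthogonality (\ref{eq:dp3}) from nontriviality of the profiles, and obtain (\ref{eq:dp5})--(\ref{eq:dp7}) from weak convergence together with hypothesis (\ref{eq:ab2}), which is precisely what allows the $H$-norm to decouple despite the potential. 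None of these steps appears in your text.

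What you have actually sketched is the proof of the theorem on the existence of a critical element (the construction of $\varphi_{c}$ with relatively compact flow), which is a \emph{consequence} of the profile decomposition, not the decomposition itself. Only your first paragraph --- the verification of (\ref{eq:ab1})--(\ref{eq:ab5}) for $A=-\Delta+V$ --- is relevant to the statement, and even that is a verification of the hypotheses (carried out in the paper in a separate proposition), not a proof of the implication from hypotheses to conclusion. Note also that in the paper this theorem is imported from the cited reference and is not reproved; if you are asked to supply a proof, you must supply the abstract extraction argument described above, for a general $A$, making no use of the specific potential, of the nonlinear equation, or of the Kenig--Merle scheme.
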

Let us show that the self-adjoint operator $A:=-\Delta+V$ verifies
the hypothesis of the previous theorem.
\begin{prop}
\label{prop:prof_assumpt}Let $A:=-\Delta+V$. Then $A$ satisfies
the assumptions (\ref{eq:ab1}), (\ref{eq:ab2}), (\ref{eq:ab3}), (\ref{eq:ab4}), (\ref{eq:ab5}).
\end{prop}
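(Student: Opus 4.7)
The plan is to verify the five conditions (\ref{eq:ab1})--(\ref{eq:ab5}) in turn, with the decay hypothesis $V\in L^{d/2}$ doing most of the work. A quick integration by parts simplifies the form appearing in (\ref{eq:ab2}) to $B(u,v)=\int V u\,\bar v$, so every verification reduces to bounding a pairing against $V$.

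For (\ref{eq:ab1}), positivity of $V$ gives the lower bound $(Au,u)\geq\Vert\nabla u\Vert_{L^2}^2$, while H\"older combined with the Sobolev embedding $H^1\hookrightarrow L^{2^\star}$ yields $\int V|u|^2\leq\Vert V\Vert_{L^{d/2}}\Vert u\Vert_{L^{2^\star}}^2\lesssim\Vert\nabla u\Vert_{L^2}^2$. The continuity statement (\ref{eq:ab5}) is immediate from strong continuity of the unitary group $e^{itA}$ (which acts continuously on $H^1$ thanks to (\ref{eq:ab1})) combined with the continuity of translations on $H^1$.

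For (\ref{eq:ab2}), a change of variable rewrites the form as $\int V(\cdot+x_n)\psi\overline{h_n}$. When $x_n\to\pm\infty$, I cut $\psi$ with a large ball $B$: on $B$, $\Vert V(\cdot+x_n)\Vert_{L^{d/2}(B)}\to 0$ by dominated convergence; outside $B$, $\Vert\psi\Vert_{L^{2^\star}(B^c)}$ is small since $\psi\in L^{2^\star}$, and H\"older with $\sup_n\Vert h_n\Vert_{L^{2^\star}}<\infty$ closes each piece. When $x_n\to\bar x$ and $h_n\rightharpoonup 0$ in $H^1$, the same holds for $\tau_{x_n}h_n$, while $V\tau_{x_n}\psi\in L^{(2^\star)'}\hookrightarrow H^{-1}$ by H\"older, so the pairing vanishes. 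For (\ref{eq:ab3}) I approximate $\psi\in H^1$ by $\psi_\varepsilon\in\mathcal{S}(\mathbb{R}^d)$: the dispersive estimate (\ref{eq:dispest}) makes $\Vert e^{it_n A}\tau_{x_n}\psi_\varepsilon\Vert_{L^p}$ vanish as $|t_n|\to\infty$ (for $2<p\leq 2^\star$, which covers the range needed in the sequel), while the uniform $H^1$-boundedness of $e^{itA}$ together with Sobolev absorbs the $H^1$-small tail.

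The main obstacle is (\ref{eq:ab4}). Here I would compare the perturbed and free flows on a bounded time window via Duhamel,
\[
e^{it_n A}\tau_{x_n}\psi = e^{-it_n\Delta}\tau_{x_n}\psi \pm i\int_0^{t_n} e^{-i(t_n-s)\Delta}\bigl(V\,e^{is A}\tau_{x_n}\psi\bigr)\,ds,
\]
and then apply $\tau_{-x_n}$ using the commutation $\tau_{-x_n}\circ e^{-it\Delta}=e^{-it\Delta}\circ\tau_{-x_n}$. The translated first term gives $e^{-it_n\Delta}\psi\to e^{-i\bar t\Delta}\psi=:\varphi$ in $H^1$. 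To kill the Duhamel remainder I would first take $\psi\in C_c^\infty$: admissible Strichartz (\ref{eq:adst}) combined with the $H^1$-boundedness of $e^{isA}$ keeps $\tau_{-x_n}e^{isA}\tau_{x_n}\psi$ uniformly bounded in $L^\infty_s H^1$, while $V(\cdot+x_n)\to 0$ locally in $L^{d/2}$ and $t_n$ is bounded, so a H\"older/Sobolev estimate shows the inhomogeneity $V(\cdot+x_n)\cdot\tau_{-x_n}e^{isA}\tau_{x_n}\psi$ vanishes in an appropriate dual Strichartz norm. Feeding this back through (\ref{eq:adst}) controls the remainder in $H^1$, and a density argument in $H^1$ extends the conclusion to general $\psi$, with $\varphi=e^{-i\bar t\Delta}\psi$. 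This step is delicate precisely because it is where the $L^{d/2}$-decay of $V$ must be combined with the concentration-at-infinity of the data on a bounded time window.
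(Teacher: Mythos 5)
Your verifications of (\ref{eq:ab1}), (\ref{eq:ab2}), (\ref{eq:ab3}) and (\ref{eq:ab5}) match the paper's in substance; the slight variant you use in the second half of (\ref{eq:ab2}) (strong $L^{(2^\star)'}$ convergence of $V\tau_{x_n}\psi$ paired against $\tau_{x_n}h_n\rightharpoonup 0$) is fine and equivalent to the paper's argument with $\tau_{-x_n}V\to\tau_{-\bar x}V$ in $L^{d/2}$.

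The step that contains a genuine gap is (\ref{eq:ab4}), and it comes from the direction in which you write the Duhamel formula. You put $V$ against the perturbed flow: after pulling out the translation, the inhomogeneity is $(\tau_{-x_n}V)\,g_n(s)$ with $g_n(s)=\tau_{-x_n}e^{isA}\tau_{x_n}\psi$. You then claim that a uniform $L^\infty_s H^1$ bound on $g_n$, together with $\tau_{-x_n}V\to 0$ locally in $L^{d/2}$, makes this small in a dual Strichartz norm. But H\"older only gives $\|(\tau_{-x_n}V)g_n(s)\|_{L^\eta}\leq\|\tau_{-x_n}V\|_{L^{d/2}(B_\Lambda)}\|g_n(s)\|_{L^{2^\star}}+\|V\|_{L^{d/2}}\|g_n(s)\|_{L^{2^\star}(B_\Lambda^c)}$, and the second term requires spatial tightness of $\{g_n(s):|s|\leq T,\ n\geq 1\}$ in $L^{2^\star}$, \emph{uniformly in $n$} — which is not a consequence of the $H^1$ bound, since $g_n$ is generated by the $n$-dependent operator $-\Delta+V(\cdot+x_n)$ and Schr\"odinger propagation has no finite speed. (In fact, that tightness is essentially equivalent to the statement you are trying to prove.) The paper avoids the issue by choosing the opposite Duhamel direction: it writes $e^{-it\Delta}\tau_{x_n}\psi-e^{itA}\tau_{x_n}\psi$ as a Duhamel integral through $e^{-i(t-s)A}$ with source $V e^{-is\Delta}\tau_{x_n}\psi$, which after unwinding the translation becomes $(\tau_{-x_n}V)\,e^{-is\Delta}\psi$ — a product of the translated potential with a \emph{fixed} ($n$-independent) $H^1$-valued curve, whose $L^{2^\star}$ mass is tight on $[0,\bar t+1]$ by compactness in time. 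That choice is what makes the local smallness of $\tau_{-x_n}V$ bite. Your route could in principle be rescued by an iteration/continuity argument that repeatedly substitutes $g_n=e^{-is\Delta}\psi+(g_n-e^{-is\Delta}\psi)$ and absorbs the error on short time intervals, but as written the implication does not hold; and even then you would still need, as the paper does, the assumptions $\nabla V\in L^{d/2}$ and $V\in L^d$ (via $W^{1,d/2}\hookrightarrow L^d$) to close the $H^1$ estimate, which your ``H\"older/Sobolev'' gloss does not surface.
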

\begin{proof}
\textbf{Assumption (\ref{eq:ab1}).} Because $V$ is non-negative,
by the Hölder inequality, (\ref{eq:delta}), and the Sobolev embedding
$H^{1}\hookrightarrow L^{2^{\star}}$, 
\begin{multline*}
\Vert u\Vert_{H^{1}}^{2}\leq(Au,u)+\Vert u\Vert_{L^{2}}=\int|\nabla u|^{2}+\int V|u|^{2}+\int|u|^{2} \\ \leq\Vert u\Vert_{H^{1}}^{2}+\Vert V\Vert_{L^{d/2}}\Vert u\Vert_{L^{2^{\star}}}^{2}
\leq(1+C_{\text{Sobolev}}\Vert V\Vert_{L^{d/2}})\Vert u\Vert_{H^{1}}^{2}.
\end{multline*}
and (\ref{eq:ab1}) holds.

\textbf{Assumption (\ref{eq:ab2}).} We have
\[
B(\tau_{x_{n}}\psi,\tau_{x_{n}}h_{n})=\int V\tau_{x_{n}}\psi\overline{\tau_{x_{n}}h_{n}}.
\]

Assume that $x_{n}\rightarrow\bar{x}\in\mathbb{R}$ and $h_{n}\underset{H^{1}}{\rightharpoonup}0$.
Notice that $B$ can also be written 
\[
B(\tau_{x_{n}}\psi,\tau_{x_{n}}h_{n})=\int(\tau_{-x_{n}}V)\psi\overline{h_{n}}.
\]
By Sobolev embedding, $h_{n}\rightharpoonup0$ weakly in $L^{2^{\star}}$.
Moreover, $\tau_{-x_{n}}V\rightarrow\tau_{-\bar{x}}V$ strongly in
$L^{d/2}$. Therefore, because $\psi\in L^{2^{\star}}$ by Sobolev
embedding again, it follows from (\ref{eq:delta}) that $B(\tau_{x_{n}}\psi,\tau_{x_{n}}h_{n})\rightarrow0$.

Now, let us assume that 
\[
x_{n}\rightarrow+\infty,\ \sup\Vert h_{n}\Vert_{H^{1}}<\infty.
\]
We fix $\epsilon>0$. By the Sobolev embedding $H^{1}\hookrightarrow L^{2^{\star}}$,
we can choose $\varLambda>0$ large enough so that 
\begin{equation}
\Vert\psi\Vert_{L^{2^{\star}}(|x|\geq\Lambda)}\leq\epsilon.\label{eq:d3r1}
\end{equation}
Because $V\in L^{d/2}$, $\varLambda$ can also be choosen large enough
so that
\begin{equation}
\Vert V\Vert_{L^{d/2}(|x|\geq\Lambda)}\leq\epsilon.\label{eq:d3r2}
\end{equation}
Then, by the Hölder inequality -- recall that $\eta$ is defined
in (\ref{eq:eta0}) as the conjugate of $2^{\star}$ -- by Sobolev
embedding and the Minkoswski inequality
\begin{multline*}
|B(\tau_{x_{n}}\psi,\tau_{x_{n}}h_{n})|\leq\Vert h_{n}\Vert_{L^{_{2^{\star}}}}\Vert V\tau_{x_{n}}\Vert_{L^{\eta}}\\
\lesssim\sup_{j\geq1}\Vert h_{j}\Vert_{H^{1}}\left(\Vert V\psi(\cdot-x_{n})\Vert_{L^{\eta}(|x-x_{n}|\geq\varLambda)}+\Vert V\psi(\cdot-x_{n})\Vert_{L^{\eta}(|x-x_{n}|\leq\varLambda)}\right).
\end{multline*}
Thus, by the Hölder inequality again, using this time (\ref{eq:eta}),
we have
\begin{equation}
|B(\tau_{x_{n}}\psi,\tau_{x_{n}}h_{n})|\lesssim\Vert V\Vert_{L^{d/2}}\Vert\psi\boldsymbol{1}_{|x|\geq\Lambda}\Vert_{L^{2^{\star}}}+\Vert V\boldsymbol{1}_{|x-x_{n}|\leq\Lambda}\Vert_{L^{d/2}}\Vert\psi\Vert_{L^{2^{\star}}}.\label{eq:d3r3}
\end{equation}
Now, let $n_{0}$ be large enough so that for all $n\geq n_{0}$,
$x_{n}\geq2\varLambda$. Then, for all $n\geq n_{0}$ 
\[
|x-x_{n}|\leq\varLambda\Rightarrow|x|\geq\varLambda
\]
and, for all $n\geq n_{0}$ we get by (\ref{eq:d3r1}), (\ref{eq:d3r2}),
(\ref{eq:d3r3})
\[
|B(\tau_{x_{n}}\psi,\tau_{x_{n}}h_{n})|\lesssim\left(\epsilon\Vert V\Vert_{L^{\delta}}+\epsilon\Vert\psi\Vert_{L^{2^{\star}}}\right)
\]
so (\ref{eq:ab2}) holds.

\textbf{Assumption (\ref{eq:ab3}).} The same proof as in \cite{AsymptAn}
holds: it is an immediate consequence of the pointwise dispersive
estimate (\ref{eq:dispest}) and the translation invariance of the
$L^{p}$ norms. Notice that the estimate
\begin{equation}
\Vert e^{itA}f\Vert_{H^{1}}\lesssim\Vert f\Vert_{H^{1}},\label{eq:subiso}
\end{equation}
which is usefull to close the density argument of this previous paper,
generalizes to dimensions $d\geq2$ because, as $V$ is positive and
in $L^{d/2}$, by the Hölder inequality together with the Sobolev
embedding $H^{1}\hookrightarrow L^{2^{\star}}$ we get
\begin{multline}
\Vert\nabla f\Vert_{L^{2}}^{2}\leq\Vert(-\Delta+V)^{\frac{1}{2}}f\Vert_{L^{2}}^{2}=\int|\nabla u|^{2}+\int V|u|^{2} \\ \leq\Vert f\Vert_{H^{1}}^{2}+\Vert V\Vert_{L^{d/2}}\Vert u\Vert_{L^{2^{\star}}}^{2}\lesssim\Vert f\Vert_{H^{1}}^{2},
\end{multline}
from which (\ref{eq:subiso}) follows because $e^{itA}$ commute with
$(-\Delta+V)^{\frac{1}{2}}$ and is an isometry on $L^{2}$.

\textbf{Assumption (\ref{eq:ab4}).} We will show that
\[
t_{n}\rightarrow\bar{t},\:x_{n}\rightarrow+\infty\;\Rightarrow\:\Vert\tau_{-x_{n}}e^{it_{n}(-\Delta+V)}\tau_{x_{n}}\psi-e^{-i\bar{t}\Delta}\psi\Vert_{H^{1}}\rightarrow0
\]
and (\ref{eq:ab4}) will hold with $\varphi=e^{-i\bar{t}\Delta}\psi$.
As remarked in \cite{AsymptAn}, it is sufficent to show that 
\begin{equation}
\Vert e^{it_{n}(-\Delta+V)}\tau_{x_{n}}\psi-e^{-it_{n}\Delta}\tau_{x_{n}}\psi\Vert_{H^{1}}\rightarrow0.\label{eq:l1}
\end{equation}

Notice $e^{-it\Delta}\tau_{x_{n}}\psi-e^{it(-\Delta+V)}\tau_{x_{n}}\psi$
is a solution of the following linear Schrödinger equation with zero
initial data
\[
i\partial_{t}u-\Delta u+Vu=Ve^{-it\Delta}\tau_{x_{n}}\psi.
\]
Therefore, by the inhomogenous Strichartz estimates, as $(2,2^{\star})$
is admissible in dimension $d\geq3$ with dual exponent $(2,\eta)$,
and because the translation operator commutes with $e^{-it\Delta}$,
we have for $n$ large enough so that $t_{n}\in(0,\bar{t}+1)$ 
\begin{multline*}
\Vert e^{it_{n}(-\Delta+V)}\tau_{x_{n}}\psi-e^{-it_{n}\Delta}\tau_{x_{n}}\psi\Vert_{L^{2}}\\
\leq\Vert e^{it(-\Delta+V)}\tau_{x_{n}}\psi-e^{-it\Delta}\tau_{x_{n}}\psi\Vert_{L^{\infty}(0,\bar{t}+1)L^{2}}\leq\Vert Ve^{-it\Delta}\tau_{x_{n}}\psi\Vert_{L^{2}(0,\bar{t}+1)L^{\eta}}\\
=\Vert(\tau_{-x_{n}}V)e^{-it\Delta}\psi\Vert_{L^{2}(0,\bar{t}+1)L^{\eta}}\leq(\bar{t}+1)^{1/2}\Vert(\tau_{-x_{n}}V)e^{-it\Delta}\psi\Vert_{L^{\infty}(0,\bar{t}+1)L^{\eta}}.
\end{multline*}
Hence, estimating in the same manner the gradient of these quantities,
it is sufficient to obtain (\ref{eq:l1}) to show that, as $n$ goes
to infinity
\begin{equation}
\Vert(\tau_{-x_{n}}V)e^{-it\Delta}\psi\Vert_{L^{\infty}(0,\bar{t}+1)W^{1,\eta}}\rightarrow0.\label{eq:vw11}
\end{equation}

Let us fix $\epsilon>0$. By Sobolev embedding in $L^{2^{\star}}$,
because $e^{-it\Delta}\psi\in C([0,\bar{t}+1],H^{1})$ and using the
compacity in time, there exists $\varLambda>0$ such that 
\begin{equation}
\Vert e^{-it\Delta}\psi\Vert_{L^{\infty}(0,\bar{t}+1)L^{2^{\star}}(|x|\geq\varLambda)}\leq\epsilon.\label{eq:d3ra21}
\end{equation}
On the other hand, as $V\in L^{d/2}$, $\varLambda$ can also be taken
large enough so that
\begin{equation}
\Vert V\Vert_{L^{d/2}(|x|\geq\Lambda)}\leq\epsilon.\label{eq:d3ra22}
\end{equation}
Let $n_{0}$ be large enough so that for all $n\geq n_{0}$, $x_{n}\geq2\varLambda$.
Then, for $n\geq n_{0}$ 
\[
|x+x_{n}|\leq\varLambda\Rightarrow|x|\geq\varLambda
\]
and for all $t\in(0,\bar{t}+1)$ and all $n\geq n_{0}$ we obtain,
by Minkowski inequality, Hölder inequality together with (\ref{eq:d3ra21})
and (\ref{eq:d3ra22}), and Sobolev embedding
\begin{multline*}
\Vert(\tau_{-x_{n}}V)e^{-it\Delta}\psi\Vert_{L^{\eta}}  \\ \ \leq\Vert V(\cdot+x_{n})e^{-it\Delta}\psi\Vert_{L^{\eta}(|x+x_{n}|\geq\varLambda)}+\Vert V(\cdot+x_{n})e^{-it\Delta}\psi\Vert_{L^{\eta}(|x+x_{n}|\leq\varLambda)}  \\
\ \leq\epsilon\Vert e^{-it\Delta}\psi\Vert_{L^{\infty}(0,\bar{t}+1)L^{2^{\star}}}+\epsilon\Vert V\Vert_{L^{\delta}}\lesssim\epsilon(\Vert e^{-it\Delta}\psi\Vert_{L^{\infty}(0,\bar{t}+1)H^{1}}+\Vert V\Vert_{L^{\delta}}),
\end{multline*}
thus 
\[
\Vert(\tau_{-x_{n}}V)e^{-it\Delta}\psi\Vert_{L^{\infty}(0,\bar{t}+1)L^{\eta}}\rightarrow0.
\]
 With the same argument, because $\nabla V\in L^{d/2}$, we have
\[
\Vert\nabla(\tau_{-x_{n}}V)e^{-it\Delta}\psi\Vert_{L^{\infty}(0,\bar{t}+1)L^{\eta}}\rightarrow0.
\]
Hence, to obtain (\ref{eq:vw11}), it only remains to show that
\begin{equation}
\Vert\tau_{-x_{n}}V\nabla(e^{-it\Delta}\psi)\Vert_{L^{\infty}(0,\bar{t}+1)L^{\eta}}\rightarrow0.\label{eq:assab4rem}
\end{equation}
To this purpose, let $\tilde{\psi}$ be a $C^{\infty}$, compactly
supported function such that 
\[
\Vert\psi-\tilde{\psi}\Vert_{H^{1}}\leq\epsilon.
\]
Notice that, by (\ref{eq:eta0}) we have
\[
\frac{1}{\eta}=\frac{1}{2}+\frac{1}{d},
\]
hence, by Minkowski and Hölder inequalities,
\begin{align}
\Vert\tau_{-x_{n}}V\nabla(e^{-it\Delta}\psi)\Vert_{L^{\eta}} & \leq\Vert\tau_{-x_{n}}V\nabla(e^{-it\Delta}\tilde{\psi})\Vert_{L^{\eta}}+\Vert\tau_{-x_{n}}V\nabla(e^{-it\Delta}(\psi-\tilde{\psi}))\Vert_{L^{\eta}}\nonumber \\
 & \leq\Vert\tau_{-x_{n}}V\nabla(e^{-it\Delta}\tilde{\psi})\Vert_{L^{\eta}}+\Vert V\Vert_{L^{d}}\Vert\nabla(e^{-it\Delta}(\psi-\tilde{\psi}))\Vert_{L^{2}}\nonumber \\
 & \leq\Vert\tau_{-x_{n}}V\nabla(e^{-it\Delta}\tilde{\psi})\Vert_{L^{\eta}}+\epsilon\Vert V\Vert_{L^{d}},\label{eq:rd3fas}
\end{align}
where $V\in L^{d}$ because of the (critical) Sobolev embedding $W^{1,d/2}(\mathbb{R}^{d})\hookrightarrow L^{d}(\mathbb{R}^{d})$.

Then, because $\nabla(e^{-it\Delta}\tilde{\psi})\in H^{1}$, 
\[
\Vert\tau_{-x_{n}}V\nabla(e^{-it\Delta}\tilde{\psi})\Vert_{L^{\infty}(0,\bar{t}+1)L^{\eta}}
\]
 can be estimated as $\Vert(\tau_{-x_{n}}V)e^{-it\Delta}\psi\Vert_{L^{\infty}(0,\bar{t}+1)L^{\eta}}$,
hence (\ref{eq:assab4rem}) follows from (\ref{eq:rd3fas}) and the
assumption is verified.

\textbf{Assumption (\ref{eq:ab5}).} It is a consequence of (\ref{eq:subiso}),
the Lebesgue's dominated convergence theorem and the continuity of
$t\in\mathbb{R}\longrightarrow e^{itA}\tau_{\bar{x}}\psi\in H^{1}$
with the exact same proof as in \cite{AsymptAn}.
\end{proof}

\subsection{Non linear profiles }

Similarly to \cite{AsymptAn}, we will now see that for a data which
escapes to infinity, the solutions of (\ref{eq:nlsv}) are the same
as these of the homogeneous equation ($V=0$), in the sense given
by the three next Propositions:
\begin{prop}
\label{prop:decay_lin}Let $\psi\in H^{1}$, $(x_{n})_{n\geq1}\in\mathbb{R}^{\mathbb{N}}$
be such that $|x_{n}|\rightarrow\infty$. Then, up to a subsequence
\begin{equation}
\Vert e^{-it\Delta}\tau_{x_{n}}\psi-e^{-it(\Delta-V)}\tau_{x_{n}}\psi\Vert_{L^{p}L^{r}}\rightarrow0\label{eq:dec1}
\end{equation}
as $n\rightarrow\infty$.
\end{prop}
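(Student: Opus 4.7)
I apply Duhamel and reduce to estimating a source term. Setting
\[
w_n(t) := e^{-it\Delta}\tau_{x_n}\psi - e^{-it(\Delta-V)}\tau_{x_n}\psi,
\]
$w_n$ solves an inhomogeneous Schr\"odinger equation with zero initial data and source proportional to $V e^{-it\Delta}\tau_{x_n}\psi$, so Duhamel followed by the non-admissible inhomogeneous Strichartz estimate (\ref{eq:st4-1}) gives
\[
\|w_n\|_{L^pL^r} \lesssim \|V\,e^{-is\Delta}\tau_{x_n}\psi\|_{L^{\gamma'}L^{\eta}}.
\]
The task thus reduces to showing that this source vanishes as $n\to\infty$.

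Combining (\ref{eq:st2}) with its free-flow analogue yields $\|w_n\|_{L^pL^r}\lesssim\|\psi\|_{H^1}$ uniformly in $n$, which reduces the problem by density to $\psi\in C_c^\infty(\mathbb{R}^d)$. For such $\psi$, I commute $\tau_{x_n}$ through $e^{-is\Delta}$ and change variables, rewriting the inner norm as $\|V(\cdot+x_n)\,e^{-is\Delta}\psi\|_{L^\eta}$. Pointwise in $s$, this tends to $0$ by the same cut-off argument used for Assumption (\ref{eq:ab4}) in Proposition \ref{prop:prof_assumpt}: choose $\Lambda$ large enough that $\|e^{-is\Delta}\psi\|_{L^{2^\star}(|y|\geq\Lambda)}\leq\epsilon$ and $\|V\|_{L^{d/2}(|x|\geq\Lambda)}\leq\epsilon$; then for $|x_n|\geq 2\Lambda$ the implication $|y|\leq\Lambda\Rightarrow|y+x_n|\geq\Lambda$ together with H\"older via (\ref{eq:eta}) gives the bound $C\epsilon$.

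To promote pointwise-in-$s$ convergence to $L^{\gamma'}_s$ convergence I apply dominated convergence. The universal bound
\[
\|V\,e^{-is\Delta}\tau_{x_n}\psi\|_{L^\eta}\leq\|V\|_{L^{d/2}}\,\|e^{-is\Delta}\psi\|_{L^{2^\star}},
\]
combined with the dispersive estimate (\ref{eq:dispest}) applied to $\psi\in C_c^\infty\subset L^{(2^\star)'}$, produces a majorant $\lesssim\min(1,|s|^{-1})$, which lies in $L^{\gamma'}_s(\mathbb{R})$ thanks to Foschi's admissibility of $(\gamma,\eta')$. A subsequence extraction (as in the statement) secures the a.e.\ pointwise convergence required by dominated convergence, concluding the proof. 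The main obstacle is precisely this last step: a purely Strichartz-based bound on the source would not yield a time-integrable majorant, so invoking the dispersive estimate on a dense class of smooth compactly supported data is essential.
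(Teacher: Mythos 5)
Your proof is correct and structurally close to the paper's: both reduce via Duhamel and the non-admissible inhomogeneous Strichartz estimate (\ref{eq:st4-1}) to estimating the source $V\,e^{-is\Delta}\tau_{x_n}\psi$ in $L^{\gamma'}_s L^\eta$, and both exploit the same local decay of $\tau_{-x_n}V$ (the cut-off argument from the proof of Proposition \ref{prop:prof_assumpt}, assumption (\ref{eq:ab4})). Where you diverge is in the treatment of the time variable. The paper splits $(0,\infty)$ at a large $T$: on the tail $(T,\infty)$ it shows $\sup_n\Vert e^{it(-\Delta+V)}\tau_{x_n}\psi\Vert_{L^p(T,\infty)L^r}\to 0$ via dispersion and density (citing the $1$d case), and on the finite piece it bounds the source by $T^{1/\gamma'}\Vert\cdot\Vert_{L^\infty_t L^\eta}$, requiring $\gamma'<\infty$. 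You instead work on all of $\mathbb{R}$ in one shot: after a density reduction to $\psi\in C_c^\infty$, you note pointwise-in-$s$ decay and apply dominated convergence with the dispersive majorant $\Vert V\Vert_{L^{d/2}}\min(1,|s|^{-1})\in L^{\gamma'}_s$. This needs the complementary inequality $\gamma'>1$, i.e.\ $\gamma<\infty$, which does hold in the intercritical regime (\ref{eq:intercrit}) thanks to the Foschi scaling relation $\tfrac1p+\tfrac1\gamma=\tfrac d2\big(1-\tfrac1r-\tfrac1{\eta'}\big)$ together with $\alpha>4/d$, so your argument closes. The two packagings cost about the same; yours merges the tail and bulk into one dominated-convergence step at the price of running the density reduction globally rather than only on the tail. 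One small remark: the subsequence extraction you invoke is not actually needed for the pointwise-in-$s$ convergence, which follows directly from $|x_n|\to\infty$ for every fixed $s$.
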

\begin{proof}
We assume for example $x_{n}\rightarrow+\infty$.

By the dispersive estimate and a density argument, the same proof
as in \cite{AsymptAn} gives

\begin{equation}
\sup_{n\in\mathbb{N}}\Vert e^{it(-\Delta+V)}\tau_{x_{n}}\psi\Vert_{L^{p}(T,\infty)L^{r}}\rightarrow0\label{eq:loctemp}
\end{equation}
as $T\rightarrow\infty$. We are therefore reduced to show that for
$T>0$ fixed

\[
\Vert e^{-it\Delta}\tau_{x_{n}}\psi-e^{it(-\Delta+V)}\tau_{x_{n}}\psi\Vert_{L^{p}(0,T)L^{r}}\rightarrow0
\]
as $n\rightarrow\infty$. Let us pick $\epsilon>0$. The differene
$e^{-it\Delta}\tau_{x_{n}}\psi-e^{it(-\Delta+V)}\tau_{x_{n}}\psi$
is a solution of the following linear Schrödinger equation with zero
initial data
\[
i\partial_{t}u-\Delta u+Vu=Ve^{-it\Delta}\tau_{x_{n}}\psi.
\]
So, by the inhomogenous Strichartz estimate (\ref{eq:st4-1}) 
\begin{eqnarray*}
\Vert e^{-it\Delta}\tau_{x_{n}}\psi-e^{it(-\Delta+V)}\tau_{x_{n}}\psi\Vert_{L_{t}^{p}(0,T)L^{r}} & \lesssim & \Vert Ve^{-it\Delta}\tau_{x_{n}}\psi\Vert_{L_{t}^{\gamma'}(0,T)L^{\eta}}\\
 & \lesssim & T^{\frac{1}{\gamma'}}\Vert Ve^{-it\Delta}\tau_{x_{n}}\psi\Vert_{L^{\infty}(0,T)L^{\eta}}\\
 &  & =T^{\frac{1}{\gamma'}}\Vert(\tau_{-x_{n}}V)e^{-it\Delta}\psi\Vert_{L^{\infty}(0,T)L^{\eta}}
\end{eqnarray*}
because the translation operator $\tau_{x_{n}}$ commutes with the
propagator $e^{-it\Delta}$. But 
\[
\Vert(\tau_{-x_{n}}V)e^{-it\Delta}\psi\Vert_{L^{\infty}(0,T)L^{\eta}}\underset{n\rightarrow\infty}{\longrightarrow}0
\]
as seen in the proof of Proposition \ref{prop:prof_assumpt}, point
(\ref{eq:ab4}).
\end{proof}
\begin{prop}
\label{prop:decay_duhamel}Let $\psi\in H^{1}$, $(x_{n})_{n\geq1}\in\mathbb{R}^{\mathbb{N}}$
be such that $|x_{n}|\rightarrow\infty$, $U\in C(H^{1})\cap L^{p}L^{r}$
be the unique solution to the homogeneous equation
\[
i\partial_{t}u+\Delta u=u|u|^{\alpha}
\]
with initial data $\psi$, and $U_{n}(t,x):=U(t,x-x_{n})$. Then,
up to a subsequence

\begin{equation}
\Vert\int_{0}^{t}e^{-i(t-s)\Delta}\left(U_{n}|U_{n}|^{\alpha}\right)(s)ds-\int_{0}^{t}e^{-i(t-s)(\Delta-V)}\left(U_{n}|U_{n}|^{\alpha}\right)(s)ds\Vert_{L^{p}L^{r}}\rightarrow0\label{eq:dec2}
\end{equation}
as $n\rightarrow\infty$. 
\end{prop}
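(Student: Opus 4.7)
My plan is to adapt the strategy of Proposition \ref{prop:decay_lin} to the Duhamel setting. Set
\[
A_n(t):=\int_{0}^{t}e^{-i(t-s)\Delta}(U_{n}|U_{n}|^{\alpha})(s)\,ds,\qquad
B_n(t):=\int_{0}^{t}e^{-i(t-s)(\Delta-V)}(U_{n}|U_{n}|^{\alpha})(s)\,ds.
\]
The difference $W_n:=A_n-B_n$ satisfies a linear Schr\"odinger equation with potential and zero initial data, whose forcing is, up to a sign, $V A_n$. By the non-admissible inhomogeneous Strichartz estimate (\ref{eq:st4-1}),
\[
\Vert W_n\Vert_{L^{p}L^{r}}\lesssim \Vert V A_n\Vert_{L^{\gamma'}L^{\eta}}.
\]
The crucial structural observation is that $A_n$ is a pure spatial translate of a fixed function: Duhamel applied to the homogeneous NLS satisfied by $U$ gives $A_n(t,x)=A(t,x-x_n)$, where $A(t):=i\bigl(e^{-it\Delta}\psi-U(t)\bigr)\in C([0,\infty),H^{1})\cap L^{p}L^{r}$ with $\sup_t\Vert A(t)\Vert_{H^{1}}<\infty$.

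Following the two-step template of Proposition \ref{prop:decay_lin}, I first reduce to a bounded time interval, then perform a spatial splitting. Translation invariance gives $\Vert A_n\Vert_{L^{p}(T,\infty)L^{r}}=\Vert A\Vert_{L^{p}(T,\infty)L^{r}}\to 0$ uniformly in $n$ as $T\to\infty$; for $B_n$, decomposing the Duhamel integral at $s=T$ yields a $\int_T^t$ part bounded via (\ref{eq:st4}) by $\Vert F\Vert_{L^{q'}(T,\infty)L^{r'}}\to 0$ (with $F=U|U|^\alpha$), and a propagated part $e^{-i(t-T)(\Delta-V)}B_n(T)$ whose Strichartz tail is handled through an extraction --- using the "up to a subsequence" in the statement --- that forces subsequential compactness of $\{B_n(T)\}_n$ in $H^{1}$, together with (\ref{eq:st2}).

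It remains to show $\Vert V A_n\Vert_{L^{\gamma'}(0,T)L^{\eta}}\to 0$ for each fixed $T$. For $\epsilon>0$, use the decay assumption (\ref{eq:decay_assump}) to pick $\Lambda$ with $\Vert V\Vert_{L^{d/2}(|x|\geq\Lambda)}\leq\epsilon$; since $A\in C([0,T],H^{1})\hookrightarrow C([0,T],L^{2^{\star}})$ has relatively compact --- hence uniformly tight --- image in $L^{2^{\star}}$, enlarge $\Lambda$ so that $\sup_{t\in[0,T]}\Vert A(t)\mathbf{1}_{|y|\geq\Lambda}\Vert_{L^{2^{\star}}}\leq\epsilon$. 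Then for $|x_n|\geq 2\Lambda$, the spatial splitting argument of Proposition \ref{prop:decay_lin}, combined with H\"older and (\ref{eq:eta}), yields
\[
\Vert V A_n(t)\Vert_{L^{\eta}}\leq \Vert V\Vert_{L^{d/2}}\Vert A(t)\mathbf{1}_{|y|\geq\Lambda}\Vert_{L^{2^{\star}}}+\Vert V\mathbf{1}_{|x|\geq\Lambda}\Vert_{L^{d/2}}\Vert A(t)\Vert_{L^{2^{\star}}}\lesssim\epsilon
\]
uniformly in $t\in[0,T]$; integrating in time gives $\Vert V A_n\Vert_{L^{\gamma'}(0,T)L^{\eta}}\lesssim T^{1/\gamma'}\epsilon$, concluding the argument.

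The main technical obstacle is the uniform-in-$n$ tail control of $B_n$: unlike $A_n$, it is not a pure spatial translate of a fixed $L^{p}L^{r}$ function, so the smallness of $\Vert B_n\Vert_{L^{p}(T,\infty)L^{r}}$ for $T$ large requires a subsequential compactness argument on the propagated data $B_n(T)$ in $H^{1}$, which is the essential place where the "up to a subsequence" in the statement enters.
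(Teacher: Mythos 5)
Your handling of the bounded time interval $[0,T]$ is correct and matches the paper: reduce via (\ref{eq:st4-1}) to controlling $\Vert V A_n\Vert_{L^{\gamma'}(0,T)L^{\eta}}$, note $A_n(t,\cdot)=\tau_{x_n}A(t,\cdot)$, and use the spatial splitting/$L^{d/2}\times L^{2^\star}$ H\"older argument from assumption (\ref{eq:ab4}). The observation $A(t)=i(e^{-it\Delta}\psi-U(t))$, which makes the uniform-in-$n$ $H^1$-bound and time-continuity of $A$ transparent, is a clean way to phrase what the paper leaves implicit.

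The gap is in your treatment of the tail of $B_n$. You decompose $B_n$ on $[T,\infty)$ into a Duhamel piece over $(T,t)$ and a propagated piece $e^{-i(t-T)(\Delta-V)}B_n(T)$, and you claim that the latter's $L^p(T,\infty)L^r$-norm is controlled via strong subsequential compactness of $\{B_n(T)\}_n$ in $H^1$. This compactness does not hold: the forcing $U_n|U_n|^\alpha$ concentrates near $x_n\to\infty$, so $B_n(T)\rightharpoonup 0$ weakly in $H^1$ while (generically) $\Vert B_n(T)\Vert_{H^1}$ stays bounded away from zero — indeed $B_n(T)-A_n(T)\to0$ and $A_n(T)=\tau_{x_n}A(T)$ is a nontrivial translate. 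Thus no strongly convergent subsequence exists, and (\ref{eq:st2}) applied to $B_n(T)$ only gives a uniform bound, not smallness. The "up to a subsequence" in the statement merely fixes the sign of $x_n\to\pm\infty$; it does not supply compactness. The paper's proof avoids this entirely: the uniform-in-$n$ tail estimate (\ref{eq:dec2-2}) is obtained by splitting the time integral, applying the inhomogeneous Strichartz estimate to the recent past, and applying the pointwise dispersive estimate (\ref{eq:dispest}) combined with the Hardy--Littlewood--Sobolev inequality to the distant past; since the $L^{r'}$-norm of $U_n|U_n|^\alpha(s)$ is translation-invariant, the resulting bound is automatically uniform in $n$ and tends to $0$ as $T\to\infty$. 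This is the dispersive ingredient your proposal is missing.
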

\begin{proof}
In the exact same way as in \cite{AsymptAn}, inhomogenous Strichartz
estimates, and the pointwise dispersive estimate together with Hardy-Littlewood-Sobolev
inequality leads
\begin{equation}
\sup_{n\in\mathbb{N}}\Vert\int_{0}^{t}e^{-i(t-s)(\Delta-V)}\left(U_{n}|U_{n}|^{\alpha}\right)(s)ds\Vert_{L^{p}([T,\infty))L^{r}}\rightarrow0\label{eq:dec2-2}
\end{equation}
as $T$ goes to infinity. Thus; it remains to show that for $T>0$
fixed, 
\[
\Vert\int_{0}^{t}e^{-i(t-s)\Delta}\left(U_{n}|U_{n}|^{\alpha}\right)ds-\int_{0}^{t}e^{-i(t-s)(\Delta-V)}\left(U_{n}|U_{n}|^{\alpha}\right)ds\Vert_{L^{p}(0,T)L^{r}}\rightarrow0
\]
as $n\rightarrow\infty$. The difference
\[
\int_{0}^{t}e^{-i(t-s)\Delta}\left(U_{n}|U_{n}|^{\alpha}\right)ds-\int_{0}^{t}e^{-i(t-s)(\Delta-V)}\left(U_{n}|U_{n}|^{\alpha}\right)ds
\]
 is the solution of the following linear Schrödinger equation, with
zero initial data
\[
i\partial_{t}u-\Delta u+Vu=V\int_{0}^{t}e^{-i(t-s)\Delta}\left(U_{n}|U_{n}|^{\alpha}\right)ds.
\]
Hence, by the Strichartz estimate (\ref{eq:st4-1})
\begin{multline*}
\Vert\int_{0}^{t}e^{-i(t-s)\Delta}\left(U_{n}|U_{n}|^{\alpha}\right)ds-\int_{0}^{t}e^{-i(t-s)(\Delta-V)}\left(U_{n}|U_{n}|^{\alpha}\right)ds\Vert_{L^{p}(0,T)L^{r}}\\
\lesssim\Vert V\int_{0}^{t}e^{-i(t-s)\Delta}\left(U_{n}|U_{n}|^{\alpha}\right)ds\Vert_{L^{\gamma'}(0,T)L^{\eta}}\\
\lesssim T^{\frac{1}{\gamma'}}\Vert(\tau_{-x_{n}}V)\int_{0}^{t}e^{-i(t-s)\Delta}\left(U|U|^{\alpha}\right)ds\Vert_{L^{\infty}(0,T)L^{\eta}}.
\end{multline*}
But $\int_{0}^{t}e^{-i(t-s)\Delta}(U|U|^{\alpha})ds\in C([0,T],H^{1})$,
so by Sobolev embedding in $L^{2^{\star}}$ and compacity in time
there exists $\varLambda>0$ such that 
\[
\Vert\int_{0}^{t}e^{-i(t-s)\Delta}\left(U|U|^{\alpha}\right)ds\Vert_{L^{\infty}(0,T)L^{2^{\star}}(|x|\geq\varLambda)}\leq\epsilon
\]
therefore
\[
\Vert(\tau_{-x_{n}}V)\int_{0}^{t}e^{-i(t-s)\Delta}\left(U|U|^{\alpha}\right)ds\Vert_{L^{\infty}(0,T)L^{\eta}}\underset{n\rightarrow\infty}{\longrightarrow}0
\]
in the same way as in the proof of Proposition \ref{prop:prof_assumpt},
point (\ref{eq:ab4}) .
\end{proof}
\begin{prop}
\label{prop:tx_inf}Let $\psi\in H^{1}$, $(x_{n})_{n\geq1},\,(t_{n})_{n\geq1}\in\mathbb{R}^{\mathbb{N}}$
be such that $|x_{n}|\rightarrow\infty$ and $t_{n}\rightarrow\pm\infty$,
$U$ be a solution to the homogeneous equationsuch that
\[
\Vert U(t)-e^{-it\Delta}\psi\Vert_{H^{1}}\underset{t\rightarrow\pm\infty}{\longrightarrow}0
\]
and $U_{n}(t,x):=U(t-t_{n},x-x_{n})$. Then, up to a subsequence
\begin{equation}
\Vert e^{-i(t-t_{n})\Delta}\tau_{x_{n}}\psi-e^{-i(t-t_{n})(\Delta-V)}\tau_{x_{n}}\psi\Vert_{L^{p}L^{r}}\rightarrow0\label{eq:dec1-1}
\end{equation}
and

\begin{equation}
\Vert\int_{0}^{t}e^{-i(t-s)\Delta}\left(U_{n}|U_{n}|^{\alpha}\right)ds-\int_{0}^{t}e^{-i(t-s)(\Delta-V)}\left(U_{n}|U_{n}|^{\alpha}\right)ds\Vert_{L^{p}L^{r}}\rightarrow0\label{eq:dec2-1}
\end{equation}
as $n\rightarrow\infty$. 
\end{prop}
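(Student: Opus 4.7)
The plan is to extend the reasoning of Propositions \ref{prop:decay_lin} and \ref{prop:decay_duhamel} by leveraging the assumed scattering of $U$ at $\pm\infty$, which will absorb the time translation $t_n \to \pm\infty$. For (\ref{eq:dec1-1}): since the $L^pL^r(\mathbb{R}\times\mathbb{R}^d)$ norm is invariant under time translation, the change of variable $\tau := t-t_n$ reduces the left-hand side to that of (\ref{eq:dec1}) for the sequence $(x_n)$, and Proposition \ref{prop:decay_lin} gives the conclusion up to a subsequence.

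For (\ref{eq:dec2-1}): set $P_n(t) := \int_0^t e^{-i(t-s)\Delta}(U_n|U_n|^\alpha)(s)\,ds$ and $Q_n$ its analogue with potential. Then $I_n := P_n - Q_n$ solves $i\partial_t I_n + (\Delta-V)I_n = V P_n$ with $I_n(0)=0$, so by the inhomogeneous Strichartz estimate (\ref{eq:st4-1}),
\[
\Vert I_n \Vert_{L^pL^r} \lesssim \Vert V P_n \Vert_{L^{\gamma'}L^\eta}.
\]
After the change of variable $\tau = t-t_n$, which preserves the $L^{\gamma'}_tL^\eta_x$ norm, the commutation of $e^{-it\Delta}$ with spatial translations gives
\[
P_n(\tau+t_n,x) = \tau_{x_n}\!\left[\int_{-t_n}^\tau e^{-i(\tau-s)\Delta}(U|U|^\alpha)(s)\,ds\right]\!(x).
\]
Treating the case $t_n \to +\infty$ (the other being symmetric), the scattering identity
\[
U(\tau) = e^{-i\tau\Delta}\psi - i\int_{-\infty}^\tau e^{-i(\tau-s)\Delta}(U|U|^\alpha)(s)\,ds
\]
rewrites the bracket as $i(e^{-i\tau\Delta}\psi - U(\tau)) - R_n(\tau)$, where $R_n(\tau) := \int_{-\infty}^{-t_n}e^{-i(\tau-s)\Delta}(U|U|^\alpha)(s)\,ds$ has $\Vert R_n \Vert_{L^pL^r(\mathbb{R})} \to 0$ by Strichartz, using $U|U|^\alpha \in L^{q'}L^{r'}$ and the vanishing of the tail $\Vert U|U|^\alpha \Vert_{L^{q'}((-\infty,-t_n))L^{r'}}$.

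It therefore suffices to show that each of $\Vert V e^{-i\tau\Delta}\tau_{x_n}\psi \Vert_{L^{\gamma'}L^\eta}$, $\Vert V\tau_{x_n}U \Vert_{L^{\gamma'}L^\eta}$, and $\Vert V\tau_{x_n}R_n \Vert_{L^{\gamma'}L^\eta}$ tends to $0$ along a subsequence. The first is handled as in Proposition \ref{prop:decay_lin}; for the remaining two, Hölder based on (\ref{eq:eta}) together with $H^1\hookrightarrow L^{2^{\star}}$ gives the pointwise-in-$\tau$ control $\Vert V\tau_{x_n}f(\tau)\Vert_{L^\eta} \lesssim \Vert V \Vert_{L^{d/2}}\Vert f(\tau)\Vert_{H^1}$, pointwise vanishing as $n\to\infty$ follows from the spatial escape argument used for assumption (\ref{eq:ab4}) in Proposition \ref{prop:prof_assumpt}, and dominated convergence in $\tau$ concludes with an $L^{\gamma'}_\tau$-integrable majorant produced from admissible Strichartz bounds on $U$ and $R_n$.

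The main obstacle is precisely this global-in-time $L^{\gamma'}_\tau$ integrability, which cannot be obtained by the bounded-time compactness argument of Proposition \ref{prop:decay_duhamel}: the scattering of $U$ is what replaces that compactness, allowing the full real line to be treated by reducing the global Duhamel integral to linear data plus a Strichartz-small tail.
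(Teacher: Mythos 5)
Your treatment of (\ref{eq:dec1-1}) is clean and correct: time-translation invariance of $L^pL^r(\mathbb{R})$ reduces it directly to Proposition \ref{prop:decay_lin}. For (\ref{eq:dec2-1}), your route — change variables $\tau=t-t_n$, rewrite the resulting Duhamel integral $\int_{-t_n}^\tau$ via the scattering identity as a linear term $e^{-i\tau\Delta}\psi$ minus $U(\tau)$ plus a small tail $R_n$ — is genuinely different from the paper's, which simply decomposes $\mathbb{R}$ into $\{|t-t_n|>T\}$ and its complement as in Propositions \ref{prop:decay_lin}--\ref{prop:decay_duhamel} (the tail uniformly small in $n$, the compact region handled by translation plus the spatial escape argument). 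The algebraic identity for $P_n(\tau+t_n)$ is correct, and $\Vert R_n\Vert_{L^pL^r}\to 0$ is indeed a consequence of Strichartz and the vanishing tail of $U|U|^\alpha$.

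The gap is in the final step. After applying (\ref{eq:st4-1}) you must show $\Vert VP_n\Vert_{L^{\gamma'}(\mathbb{R})L^\eta}\to 0$, a \emph{global-in-time} $L^{\gamma'}$ estimate, and you invoke dominated convergence with "an $L^{\gamma'}_\tau$-integrable majorant produced from admissible Strichartz bounds on $U$ and $R_n$." No such majorant is produced, and it is not automatic. The natural candidate is $\Vert V\Vert_{L^{d/2}}\Vert U(\tau)\Vert_{L^{2^\star}}$ (resp.\ $\Vert V\Vert_{L^{d/2}}\Vert e^{-i\tau\Delta}\psi\Vert_{L^{2^\star}}$), so one needs $U\in L^{\gamma'}(\mathbb{R})L^{2^\star}$ (resp.\ the same for the free evolution). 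The pair $(\gamma',2^\star)$ is not admissible; from $L^2L^{2^\star}$ (endpoint Strichartz) and $L^\infty L^{2^\star}$ (from $H^1$) you only get $L^{\lambda}L^{2^\star}$ for $\lambda\ge 2$, whereas the scaling relation $\tfrac{1}{p}+\tfrac{1}{\gamma}=\tfrac{d}{2}(1-\tfrac{1}{r}-\tfrac{1}{2^\star})$ gives $\gamma'<2$ for $\alpha$ near the mass-critical threshold $4/d$. So the majorant need not exist in the full intercritical range, and dominated convergence does not close. Note that even the first term, $\Vert Ve^{-i\tau\Delta}\tau_{x_n}\psi\Vert_{L^{\gamma'}L^\eta}$, is not "handled as in Proposition \ref{prop:decay_lin}": that proof only controls $L^{\gamma'}(0,T)$ for fixed $T$, via the crude bound $T^{1/\gamma'}\Vert\cdot\Vert_{L^\infty(0,T)L^\eta}$. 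This is exactly what the paper's time truncation buys: no global $L^{\gamma'}_\tau$ information is ever needed. To repair your argument you would reintroduce that truncation — small uniform tail on $\{|\tau|>T\}$ via the $L^pL^r$ estimates directly, then $(2T)^{1/\gamma'}\Vert\cdot\Vert_{L^\infty(-T,T)L^\eta}$ on the core — which returns you to the paper's proof.

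One more small point worth checking: with the lower limit $-t_n$ in $\int_{-t_n}^\tau$, the scattering identity you use is the one at $-\infty$, so the sign of $t_n$ and the scattering direction of $U$ in the proposition's hypothesis must be paired consistently; this is compatible with the intended use (cf.\ Proposition \ref{prop:lin_scatt}), but should be stated explicitly rather than asserted as "symmetric."
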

\begin{proof}
The proof is the same as for Proposition \ref{prop:decay_lin} and
Proposition \ref{prop:decay_duhamel}, decomposing the time interval
in $\left\{ |t-t_{n}|>T\right\} $ and his complementary.
\end{proof}
Finaly, we will need the following Proposition of non linear scattering:
\begin{prop}
\label{prop:lin_scatt}Let $\varphi\in H^{1}$. Then there exists
$W_{\pm}\in C(H^{1})\cap L_{\mathbb{R}^{\pm}}^{p}L^{r}$, solution
of (\ref{eq:nlsv}) such that
\begin{equation}
\Vert W_{\pm}(t,\cdot)-e^{-it(\Delta-V)}\varphi\Vert_{H^{1}}\underset{t\rightarrow\pm\infty}{\longrightarrow}0\label{eq:nl1}
\end{equation}
moreover, if $t_{n}\rightarrow\mp\infty$ and 
\begin{equation}
\varphi_{n}=e^{-it_{n}(\Delta-V)}\varphi,\quad W_{\pm,n}(t)=W_{\pm}(t-t_{n})\label{eq:nl2}
\end{equation}
then
\begin{equation}
W_{\pm,n}(t)=e^{-it(\Delta-V)}\varphi_{n}+\int_{0}^{t}e^{-i(t-s)(\Delta-V)}(W_{\pm,n}|W_{\pm,n}|^{\alpha})(s)ds+f_{\pm,n}(t)\label{eq:nl3}
\end{equation}
where
\begin{equation}
\Vert f_{\pm,n}\Vert_{L_{\mathbb{R}^{\pm}}^{p}L^{r}}\underset{n\rightarrow\infty}{\longrightarrow0}.\label{eq:nl4}
\end{equation}
\end{prop}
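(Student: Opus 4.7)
The plan is to construct $W_\pm$ as a fixed point of the integral equation with scattering data prescribed at $\pm\infty$:
\[
\Phi(u)(t) := e^{-it(\Delta-V)}\varphi + i\int_t^{\pm\infty} e^{-i(t-s)(\Delta-V)}\bigl(u|u|^\alpha\bigr)(s)\,ds.
\]
By \eqref{eq:st2}, $e^{-it(\Delta-V)}\varphi$ belongs to $L^p L^r(\mathbb{R})$, so the tail norm $\|e^{-it(\Delta-V)}\varphi\|_{L^p((T,+\infty))L^r}$ tends to zero as $T\to+\infty$ (and symmetrically near $-\infty$). First I would work in a small ball of $L^p L^r$ on $(T,+\infty)$, estimating $\|u|u|^\alpha\|_{L^{q'}L^{r'}}$ by a H\"older interpolation that uses $r=\alpha+2$ together with the Sobolev embedding $H^1\hookrightarrow L^r$. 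The inhomogeneous Strichartz bound \eqref{eq:st4} then yields a contraction for $T$ large, while the $H^1$ regularity is propagated simultaneously via the admissible Strichartz estimates \eqref{eq:adst}. This produces $W_\pm$ on a half-line near $\pm\infty$, satisfying \eqref{eq:nl1} by the very definition of the fixed point, and extends to a global element of $C(\mathbb{R};H^1)\cap L^p_{\mathbb{R}^\pm}L^r$ by local well-posedness.

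\textbf{Derivation of \eqref{eq:nl3}--\eqref{eq:nl4}.} Time-translation invariance of \eqref{eq:nlsv} makes $W_{\pm,n}(t)=W_\pm(t-t_n)$ a solution with datum $W_\pm(-t_n)$, so its Duhamel formula from $t=0$ reads
\[
W_{\pm,n}(t) = e^{-it(\Delta-V)}W_\pm(-t_n) - i\int_0^t e^{-i(t-s)(\Delta-V)}\bigl(W_{\pm,n}|W_{\pm,n}|^\alpha\bigr)(s)\,ds.
\]
Identifying this with \eqref{eq:nl3} forces
\[
f_{\pm,n}(t) = e^{-it(\Delta-V)}\bigl(W_\pm(-t_n)-\varphi_n\bigr).
\]
Since $-t_n\to\pm\infty$, the scattering \eqref{eq:nl1} of $W_\pm$ combined with the definition \eqref{eq:nl2} of $\varphi_n$ gives $\|W_\pm(-t_n)-\varphi_n\|_{H^1}\to 0$, and \eqref{eq:nl4} then follows by one more application of \eqref{eq:st2}.

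\textbf{Main obstacle.} The analytic content is concentrated in the first step: closing the contraction in the intercritical range \eqref{eq:intercrit} requires one to use simultaneously the non-admissible estimate \eqref{eq:st4} to control the $L^pL^r$-norm of the Duhamel term and the admissible estimates \eqref{eq:adst} to propagate the $H^1$-norm, both glued together by a H\"older/Sobolev estimate on $u|u|^\alpha$ that relies on the smallness of $\|e^{-it(\Delta-V)}\varphi\|_{L^p((T,+\infty))L^r}$ for $|T|$ large. Once $W_\pm$ has been built, the identification of $f_{\pm,n}$ and its decay follow by a direct comparison of two Duhamel expansions together with the already-established scattering \eqref{eq:nl1}.
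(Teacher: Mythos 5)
Your proof is correct and follows precisely the approach the paper invokes without spelling out: the paper's proof consists solely of a citation to Proposition 3.5 of \cite{MR213495011}, and what you write is the standard argument behind that reference. Both the fixed-point construction of $W_\pm$ from the small tail of $\Vert e^{-it(\Delta-V)}\varphi\Vert_{L^p((T,\pm\infty))L^r}$ together with \eqref{eq:st4} and \eqref{eq:adst}, and the identification $f_{\pm,n}(t)=e^{-it(\Delta-V)}\bigl(W_\pm(-t_n)-\varphi_n\bigr)$ followed by \eqref{eq:nl1} and \eqref{eq:st2}, are exactly the intended steps.
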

\begin{proof}
The same proof as \cite{MR213495011}, Proposition 3.5, holds, as
it involves only the analogous Strichartz estimates.
\end{proof}

\subsection{Conclusion}

The \thmref{critsol} is now a consequence of the linear profile decomposition
together with the nonlinear profiles results of Propositions \propref{decay_lin},
\propref{decay_duhamel}, \propref{tx_inf}, \propref{lin_scatt},
perturbative result of Proposition \ref{pert3} and Strichartz inequalities
of Proposition \ref{strichartz}, in the exact same way as in \cite{AsymptAn},
Section 5.

\section{Extinction of the critical solution}

The aim of this section is to prove the following ridity theorem
\begin{thm}
\label{thm:rigidity}There is no non-trivial compact-flow solution
of (\ref{eq:nlsv}). 
\end{thm}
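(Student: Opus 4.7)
The plan is to derive a Morawetz-type estimate incompatible with the existence of a non-trivial compact-flow solution. For a single repulsive centre $a$ the standard weight $|x-a|$ yields the usual estimate, but no single global choice of centre works here; so the strategy is to pick a family of weights whose gradients vanish on the trapped ray $\mathcal{R}$ and rely on a localised transverse repulsivity of $V$ near $\mathcal{R}$ provided by (G2)--(G3).

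After a rotation I would assume that $\mathcal{R}$ lies on the $x_{1}$-axis and write $x=(x_{1},x_{\perp})$. The key geometric input is that (G3) forces $\nabla V$ to be parallel to $e_{1}$ at every point of $\mathcal{R}$: otherwise the transverse component of the force $-\nabla V$ would push a Hamiltonian trajectory off $\mathcal{R}$. Hence $\nabla_{\perp}V_{j}|_{\mathcal{R}}\equiv 0$ for $j=1,2$. Consequently, at a point of $\mathcal{R}$ the tangent plane of every level surface of $V_{j}$ coincides with the transverse hyperplane, and the convexity in (G2) translates into $\nabla^{2}_{\perp}V_{j}|_{\mathcal{R}}\leq 0$. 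A Taylor expansion in the transverse variable then gives, near $\mathcal{R}$,
\[
x_{\perp}\cdot\nabla V(x)=\langle\nabla^{2}_{\perp}V(x_{\parallel})\,x_{\perp},x_{\perp}\rangle+O(|x_{\perp}|^{3})\leq O(|x_{\perp}|^{3}),
\]
so this integrand is non-positive up to higher order in a tube around $\mathcal{R}$.

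I would then apply the Morawetz identity to a smooth weight $\Phi$ behaving like $\tfrac{1}{2}|x_{\perp}|^{2}$ close to $\mathcal{R}$ and truncated at large scales so that $\nabla\Phi$ stays bounded (which, combined with the uniform $H^{1}$ bound coming from compactness, keeps $M_{\Phi}(u(t))=2\operatorname{Im}\int\bar u\,\nabla\Phi\cdot\nabla u$ uniformly bounded in $t$). The identity produces a non-negative Hessian density $\sim|\nabla_{\perp}u|^{2}$, a non-negative nonlinear density $\sim|u|^{\alpha+2}$ (since $\Delta\Phi\geq c>0$ in the region of interest), a controllable $\Delta^{2}\Phi$ contribution, and the potential term $-2\int\nabla\Phi\cdot\nabla V|u|^{2}$. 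Inside the tube, the expansion above makes this last term favourable; outside the tube and in the non-repulsive region (bounded by (G2)) I would absorb the remaining contribution into the positive terms using the decay assumption (\ref{eq:decay_assump}) (in particular $\nabla V\in L^{d/2}(|x|^{\beta}dx)$ with $\beta>\tfrac{4}{3}$), the conserved $L^{2}$ mass, and H\"{o}lder-type inequalities. Varying the truncation within a one-parameter family lets one close this absorption argument.

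Integrating in time against the bounded boundary term then yields
\[
\int_{0}^{\infty}\!\!\int_{\mathbb{R}^{d}}\bigl(|\nabla_{\perp}u|^{2}+|u|^{\alpha+2}\bigr)\,dx\,dt<\infty.
\]
By $H^{1}$-compactness of $\{u(t)\}_{t\geq 0}$, one extracts $t_{n}\to\infty$ with $u(t_{n})\to v$ in $H^{1}$; the finiteness above forces $\int|v|^{\alpha+2}=0$, hence $v\equiv 0$, and conservation of the $L^{2}$ mass yields $u(0)\equiv 0$, the desired contradiction. The main obstacle I anticipate is the design of the family of weights in the third step: one has to ensure that the negative contribution of $-2\int\nabla\Phi\cdot\nabla V|u|^{2}$ outside the tube around $\mathcal{R}$ does \emph{not} generate a term linear in $T$ after integration in time, and this is where the uniform strict convexity in (G2) and the improved decay rate $\beta>\tfrac{4}{3}$ in (\ref{eq:decay_assump}) become essential.
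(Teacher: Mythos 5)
Your high-level strategy -- a Morawetz identity with a multiplier whose gradient is transverse to the trapped ray, with the favourable sign coming from the convexity of the level surfaces near $\mathcal{R}$ -- is the same as the paper's, and your Taylor-expansion observation that $\nabla_\perp V|_{\mathcal R}=0$ and $\nabla^2_\perp V|_{\mathcal R}\le -\alpha\,\mathrm{Id}$ correctly reproduces the near-axis part of the paper's geometric Lemma. However, the argument as written has a genuine gap in the region away from the trapped ray, and this is precisely the point the paper's Lemma \lemref{poids} is built to handle.

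Concretely: your Taylor expansion only gives the favourable sign of $-\nabla\Phi\cdot\nabla V$ for $|x_\perp|\le\rho$ with $\rho$ small (where the quadratic term dominates the cubic error). For $|x_\perp|\ge\rho$ but still inside the bounded non-repulsive region, your plan is to ``absorb'' $\int\nabla\Phi\cdot\nabla V|u|^2$ into the positive Morawetz terms using decay and H\"older. This cannot work as stated: on that bounded set $\nabla\Phi\cdot\nabla V$ is merely $O(1)$, the mass in the region is merely $O(1)$, and the positive terms ($\int|\nabla_\perp u|^2$, $\int|u|^{\alpha+2}$) are also only $O(1)$ with no smallness parameter to trade off -- the decay hypothesis on $\nabla V$ controls the far field, not a bounded annulus around $\mathcal{R}$. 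There is nothing to absorb against. The correct ingredient, which you are missing, is geometric rather than analytic: on the non-star-shaped part of a level surface with $|x_\perp|\ge\rho$, uniform strict convexity together with the trapped ray lying on $\mathcal{R}$ forces $(0,x_\perp)\cdot n(x)\ge\epsilon_0>0$ with $\epsilon_0$ independent of the level (this is assumption (G2), exploited in the second half of the proof of Lemma \lemref{poids}); on the star-shaped part, plain convexity gives $(0,x_\perp)\cdot n(x)\ge 0$. These are sign statements, not smallness statements, and they are what makes the contribution away from the tube harmless.

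A second, subtler issue is that your weight $\Phi\sim\tfrac12|x_\perp|^2$ is fixed, whereas the paper's weight $\chi_c=(|x-\boldsymbol c|+|x+\boldsymbol c|)\psi(4x/c)$ is $c$-parametrized. The $c$-parameter is not cosmetic: it creates a hierarchy of scales (positive term of size $\mu/c$, geometric error of size $O(R^4/c^4)$ on $B(0,R)$, tail error of size $R^{-\beta}\epsilon(R)$) whose balancing with $R=c^{1/\beta}$ is exactly where the hypothesis $\beta>4/3$ enters. You invoke $\beta>4/3$ without this mechanism, and your truncation parameter only lives at large $|x|$, so it cannot reproduce the balance. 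Finally, as a minor point, the paper concludes by showing $z''(t)\gtrsim\mu/(2c)>0$ uniformly in $t$, which contradicts the a priori bound on $z'$; your ``integrate in time and extract a vanishing limit'' conclusion also works once the sign is established, but it requires the clean coercive bound $z''\gtrsim\int|u|^{\alpha+2}$ that the flawed absorption was supposed to deliver.
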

By compact flow solution, we mean here a solution $u$ with a relatively
compact flow $\left\{ u_{c}(t),\ t\geq0\right\} $ in $H^{1}$. Our
key tool will be the following Morawetz identity -- or virial computation:

\begin{lem}
Let $u\in C(H^{1})$ be a solution of (\ref{eq:nlsv}) and $\chi\in C^{\infty}(\mathbb{R}^{d})$
be a smooth function. Then

\begin{equation}
\partial_{t}\int\chi|u|^{2}=2\text{Im}\int\nabla\chi\cdot\nabla u\bar{u}\label{eq:vir1}
\end{equation}
\begin{multline}
\partial_{t}^{2}\int\chi|u|^{2}=4\int(D^{2}\chi\nabla u,\nabla u)+\frac{2}{\alpha+2}\int\Delta\chi|u|^{\alpha+2} \\
-2\int\nabla\chi\cdot\nabla V|u|^{2}-\int\Delta^{2}\chi|u|^{2}.\label{eq:vir2}
\end{multline}
\end{lem}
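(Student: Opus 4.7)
The plan is a direct virial/Morawetz computation from \eqref{eq:nlsv}. First I would rewrite the equation as $\partial_t u = i\Delta u - iVu - iu|u|^\alpha$, so that time derivatives of quadratic quantities in $u$ can be expanded by Leibniz and the complex structure exploited.

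For \eqref{eq:vir1}, compute $\partial_t|u|^2 = 2\,\text{Re}(\bar u\,\partial_t u)$. Substituting the equation, the $Vu$ and $u|u|^\alpha$ contributions are purely imaginary once paired with $\bar u$, so they cancel; one is left with $\partial_t|u|^2 = -2\,\text{Im}(\bar u\,\Delta u)$. Since $\nabla\bar u\cdot\nabla u = |\nabla u|^2$ is real, $\text{Im}(\bar u\,\Delta u) = \nabla\cdot\text{Im}(\bar u\,\nabla u)$; multiplying by $\chi$ and integrating by parts once yields \eqref{eq:vir1}.

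For \eqref{eq:vir2}, take another time derivative of \eqref{eq:vir1}. Setting $P_k := \text{Im}(\bar u\,\partial_k u)$, the task is to compute $2\sum_k \int \partial_k\chi\,\partial_t P_k$. Direct expansion using the equation yields three groups of terms: the potential contribution telescopes to $-\partial_k V\,|u|^2$; the nonlinear terms combine, via $\partial_k(u|u|^\alpha) = |u|^\alpha\partial_k u + u\,\partial_k|u|^\alpha$ together with the real-part identity $\partial_k|u|^{\alpha+2} = (\alpha+2)|u|^\alpha\,\text{Re}(\bar u\,\partial_k u)$, into a total derivative proportional to $\partial_k|u|^{\alpha+2}$; and the free-Schrödinger terms produce $\text{Re}(\bar u\,\partial_k\Delta u - \Delta\bar u\,\partial_k u)$. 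Pairing with $\partial_k\chi$, summing over $k$ and integrating by parts immediately produces the $\int\nabla\chi\cdot\nabla V\,|u|^2$ term and the $\int\Delta\chi\,|u|^{\alpha+2}$ term of \eqref{eq:vir2}.

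The main step is then to evaluate the Laplacian contribution $\int \nabla\chi\cdot\text{Re}(\bar u\,\nabla\Delta u - \Delta\bar u\,\nabla u)\,dx$ and show it equals $2\int (D^2\chi\,\nabla u,\nabla u) - \tfrac12\int\Delta^2\chi\,|u|^2$. My plan is to integrate by parts once to move a derivative off $\Delta u$ and $\Delta\bar u$ onto $\nabla\chi$ and $\bar u$, then exploit the pointwise identities $\sum_j \text{Re}(\partial_j\bar u\,\partial_j\partial_k u) = \tfrac12\,\partial_k|\nabla u|^2$ and $\text{Re}(\bar u\,\Delta u) = \tfrac12\Delta|u|^2 - |\nabla u|^2$, and finally integrate by parts a second time to surface both the Hessian quadratic form $(D^2\chi\,\nabla u,\nabla u)$ and the biharmonic $\Delta^2\chi\,|u|^2$; the $\Delta\chi\,|\nabla u|^2$ remnants produced along the way cancel between the two integrations. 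All boundary terms vanish because $\chi\in C^\infty$ and $u(t)\in H^1$, modulo the standard density/truncation argument needed to justify these manipulations for a merely $H^1$ solution of a semilinear equation — the only nontrivial technical point.
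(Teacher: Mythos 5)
The paper states this lemma without proof, so there is no textual argument to compare against; your plan—the formal virial calculation via time-differentiation of the mass and momentum densities, followed by integration by parts—is the standard and correct one, and the pointwise identities you invoke are accurate. One thing you should note if you carry the nonlinear bookkeeping to the end rather than stopping at ``proportional to $\partial_k|u|^{\alpha+2}$'': the contribution of the nonlinearity to $\partial_t\,\mathrm{Im}(\bar u\,\partial_k u)$ is $-|u|^2\partial_k(|u|^\alpha)=-\tfrac{\alpha}{\alpha+2}\partial_k|u|^{\alpha+2}$, so after pairing with $2\partial_k\chi$ and integrating by parts the coefficient of $\int\Delta\chi\,|u|^{\alpha+2}$ comes out as $\tfrac{2\alpha}{\alpha+2}$, not the $\tfrac{2}{\alpha+2}$ printed in \eqref{eq:vir2}. (As a sanity check, taking $\chi=|x|^2$, $V=0$ yields $8\int|\nabla u|^2+\tfrac{4d\alpha}{\alpha+2}\int|u|^{\alpha+2}$, matching the classical virial identity.) This is a typo in the statement, harmless downstream since the rigidity argument only uses that the coefficient is positive, but your write-up should produce $\tfrac{2\alpha}{\alpha+2}$. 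You are also right that the only substantive gap is justifying these manipulations for merely $H^1$ solutions: one should regularize the data, derive the identity for smooth solutions, and pass to the limit using the $H^1$ well-posedness theory and the integrability of $\nabla V$, $V$ provided by the decay hypotheses.
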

In the case of a repulsive potential, taking the weight $\chi=|x|^{2}$
gives the result by a classical argument, as all the terms, and in
particular
\begin{equation}
\int\nabla\chi\cdot\nabla V|u|^{2}\label{eq:termechiant}
\end{equation}
have the right sign. However, with a non-repulsive potential, this
straightforward choose of weight does not permit to conclude because
(\ref{eq:termechiant}) is no signed anymore.

However, in our framework of the sum of two repulsive potentials verifying
the convexity assumptions (G1)-(G2)-(G3), we are able to construct
a family of weights that have the right behavior and for which the
non-negative part of (\ref{eq:termechiant}) can be made small enough.
The idea is to construct it in such a way that $\nabla\chi$ is almost
orthogonal to the line $\mathcal{R}$ containing the trapped trajectories.
More precisely, we would like to take as a weight
\[
|x-\boldsymbol{c}|+|x+\boldsymbol{c}|,
\]
where $\boldsymbol{c}$ is such that $(-\boldsymbol{c},\boldsymbol{c})\subset\mathcal{R}$
and will be sent to infinity. 

The smallness of the non-negative part (\ref{eq:termechiant}) will
be a consequence of the following lemma, where $\Theta_{1}$ and $\Theta_{2}$
have to be tought as level surfaces of $V_{1}$ and $V_{2}$. The
assumptions (2) and (3) of the lemma correponds to assumptions (G2)
and (G3). In the following, $n$ is choosen as the outward-pointing
normal to $\Theta_{1}$ and $\Theta_{2}$. 
\begin{lem}
\label{lem:poids}Let $\alpha>0$, $R\in C^{0}([A,+\infty[,\mathbb{R}_{+})$
be such that $R(c)/c\longrightarrow0\text{ as }c\longrightarrow+\infty$
and, for all $c\geq A$, $(\Theta_{1})(c)$, $(\Theta_{2})(c)$ be
two families of smooth convex subsets of $\mathbb{R}^{d}$. We assume
that, for all $c\geq A$ and any elements $\Theta_{1},\Theta_{2}$
of $(\Theta_{1})(c),(\Theta_{2})(c)$
\begin{enumerate}
\item $\Theta_{1}$ and $\Theta_{2}$ are contained in $B(0,R(c))$,
\item in the non star-shaped region $\left\{ x\in\partial(\Theta_{1}\cup\Theta_{2}),\ x\cdot n(x)<0\right\} $,
the eigenvalues of the second fundamental forms of $\partial\Theta_{1}$
and $\partial\Theta_{2}$ are bounded below by $\alpha$,
\item the trapped ray associated with $\mathbb{R}^{d}\backslash(\Theta_{1}\cup\Theta_{2})$
is a segment of the line $\left\{ x_{2}=\cdots=x_{d}=0\right\} $.
\end{enumerate}
Let $\boldsymbol{c}:=(c,0,\cdots,0)$. Then, for any elements $\Theta_{1},\Theta_{2}$
of $(\Theta_{1})(c),(\Theta_{2})(c)$ and $x\in\partial(\Theta_{1}\cup\Theta_{2})$,
we have as $c\longrightarrow+\infty$ 
\[
\left(\frac{x-\boldsymbol{c}}{|x-\boldsymbol{c}|}+\frac{x+\boldsymbol{c}}{|x+\boldsymbol{c}|}\right)\cdot n(x)\geq O(\frac{R(c)^{4}}{c^{4}}).
\]
\end{lem}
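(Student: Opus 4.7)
The plan is to Taylor expand $\nabla\chi$ in the small parameter $R(c)/c \to 0$ and combine the expansion with the convex-geometric inputs (1)--(3). Writing $x = (x_1, x')$ with $x' \in \mathbb{R}^{d-1}$, the elementary identities
\[
\frac{1}{|x-\boldsymbol{c}|} + \frac{1}{|x+\boldsymbol{c}|} = \frac{2}{c} + O\!\left(\frac{R(c)^2}{c^3}\right),\qquad \frac{1}{|x+\boldsymbol{c}|} - \frac{1}{|x-\boldsymbol{c}|} = -\frac{2x_1}{c^2} + O\!\left(\frac{R(c)^3}{c^4}\right),
\]
together with the decomposition $\bigl(\frac{x-\boldsymbol{c}}{|x-\boldsymbol{c}|} + \frac{x+\boldsymbol{c}}{|x+\boldsymbol{c}|}\bigr) \cdot n = (x\cdot n)\bigl[|x-\boldsymbol{c}|^{-1} + |x+\boldsymbol{c}|^{-1}\bigr] + (\boldsymbol{c}\cdot n)\bigl[|x+\boldsymbol{c}|^{-1} - |x-\boldsymbol{c}|^{-1}\bigr]$ and the identity $\boldsymbol{c}\cdot n = c\, n_1$, give the reduction
\[
\left(\frac{x-\boldsymbol{c}}{|x-\boldsymbol{c}|} + \frac{x+\boldsymbol{c}}{|x+\boldsymbol{c}|}\right) \cdot n(x) = \frac{2 \, x' \cdot n'(x)}{c} + O\!\left(\frac{R(c)^3}{c^3}\right),
\]
where $n'(x) = (n_2(x), \ldots, n_d(x))$. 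The key feature is that $\nabla\chi$ vanishes identically on the $x_1$-axis, where by assumption (3) the trapped ray lives, so the task becomes a lower bound on $x' \cdot n'$.

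Next, I split $\partial(\Theta_1 \cup \Theta_2)$ into the non star-shaped region $\{x \cdot n < 0\}$ and the star-shaped region $\{x \cdot n \geq 0\}$. On the non star-shaped part, assumption (3) places the two bouncing points $p_j \in \partial\Theta_j \cap \{x_2 = \cdots = x_d = 0\}$ with $n(p_j) = \pm e_1$, and one checks $p_j \cdot n(p_j) < 0$ so that these indeed belong to this region. I parametrize $\partial\Theta_1$ locally near $p_1$ as a graph $x_1 = x_1^{(1)} + h(x')$; convexity of $\Theta_1$ forces $h$ concave with $h(0) = \nabla h(0) = 0$, and the curvature lower bound of (2) translates to $D^2 h(0) \leq -\alpha\, I_{d-1}$. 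The outward normal is then $n(x) = \pm(1, -\nabla h(x'))/\sqrt{1 + |\nabla h|^2}$, and a Taylor expansion yields
\[
x' \cdot n'(x) = \mp\,\frac{x' \cdot \nabla h(x')}{\sqrt{1 + |\nabla h|^2}} \;\geq\; \frac{\alpha\,|x'|^2}{\sqrt{1 + |\nabla h|^2}} + O(|x'|^3) \;\geq\; 0
\]
for $|x'|$ small enough, and the same graph argument with the global curvature bound of (2) extends the estimate to the whole non star-shaped component.

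On the star-shaped region, the Hessian bound of (2) is not in force, and the naive decomposition $x' \cdot n' = (x\cdot n) - x_1 n_1 \geq -x_1 n_1$ combined with $|x| \leq R(c)$ only yields the crude $\nabla\chi \cdot n \geq -O(R(c)/c)$. The main obstacle is to sharpen this using the quantitative content of assumption (3): since the only points of $\partial(\Theta_1 \cup \Theta_2)$ whose outward normal is $\pm e_1$ are the bouncing points $p_1, p_2$---which belong to the non star-shaped region by the previous step---the star-shaped region is uniformly separated from $\{n = \pm e_1\}$, so $|n_1|$ is bounded away from $1$ there. Exploiting this separation together with the decay $R(c)/c \to 0$ upgrades the crude $O(R(c)/c)$ error into the asymptotic lower bound $-O(R(c)^4/c^4)$ claimed in the statement, and joining the two regions concludes the proof.
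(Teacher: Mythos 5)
Your opening reduction and the non-star-shaped case are a legitimate reorganization of the paper's computation: the decomposition of $\nabla\chi\cdot n$ into $(x\cdot n)$ and $(\boldsymbol{c}\cdot n)$ parts, the isolation of the leading term $\frac{2\,\tilde x\cdot\tilde n}{c}$, and the concave-graph argument with $D^2h\leq-\alpha I$ near the bounce point are all faithful to the paper (which writes the normal near the trapped ray as $n(x)=(\pm x_1/|x_1|,0,\dots,0)+(0,\lambda_2x_2,\dots,\lambda_dx_d)+O(|\tilde x|^2)$ with $\lambda_k\geq\alpha$). But there are two real gaps.

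The serious one is the star-shaped region. You concede that the crude bound $x'\cdot n'\geq -x_1 n_1\geq -R(c)$ only yields $\nabla\chi\cdot n\geq -O(R/c)$, and then assert that the separation of the star-shaped region from $\{n=\pm e_1\}$, together with $R(c)/c\to 0$, ``upgrades'' this to $-O(R^4/c^4)$. This is not an argument. Knowing $|n_1|\leq 1-\delta$ there gives $|x_1 n_1|\leq(1-\delta)R$, which is still $O(R)$; there is simply no mechanism in what you have written that converts $O(R/c)$ into $O(R^4/c^4)$, and I do not see how to repair this route. The paper does something different and much simpler: on the star-shaped region it observes that $\tilde x\cdot n(x)\geq 0$ holds directly from convexity of the obstacles, so the leading term $\frac{2c\,\tilde x\cdot\tilde n}{|x-\boldsymbol c||x+\boldsymbol c|}$ is already nonnegative on the whole boundary, with no improvement of the remainder needed and no case distinction on the sign of $x_1n_1$. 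That geometric fact is the missing key idea.

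The secondary issue is the size of your remainder. You land on $\nabla\chi\cdot n=\frac{2x'\cdot n'}{c}+O(R^3/c^3)$, but $O(R^3/c^3)$ is a \emph{larger} error than the lemma's claimed $O(R^4/c^4)$, since $R(c)/c\to 0$. Even with $x'\cdot n'\geq 0$ established everywhere, this expansion only yields $\nabla\chi\cdot n\geq -O(R^3/c^3)$, which is strictly weaker than what the lemma asserts, and it is exactly this exponent that determines the admissible decay rate $\beta$ in the rigidity proof. The paper keeps the quotient form, writing the numerator as $2c(0,\tilde x)+x\frac{|\tilde x|^2}{c}+O(R^4/c^2)$ before dividing by $|x-\boldsymbol c||x+\boldsymbol c|\gtrsim c^2$; to match the stated precision you would need to carry one more order in your Taylor expansion, not just the first nonvanishing term.
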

\begin{proof}
For $x\in B(0,R)$, let us denote $x=(x_{1},\tilde{x})$ with $\tilde{x}\in\mathbb{R}^{d-1}$.
Remark that 
\[
|x+\boldsymbol{c}|=c+x_{1}+\frac{1}{2c}|\tilde{x}|^{2}+O(\frac{R^{4}}{c^{3}})
\]
and therefore
\begin{equation}
\frac{x-\boldsymbol{c}}{|x-\boldsymbol{c}|}+\frac{x+\boldsymbol{c}}{|x+\boldsymbol{c}|}=\frac{1}{|x-\boldsymbol{c}||x+\boldsymbol{c}|}\left(2c(0,\tilde{x})+x\frac{|\tilde{x}|^{2}}{c}+O(\frac{R^{4}}{c^{2}})\right).\label{eq:lem1dl}
\end{equation}

Notice that, in the star-shaped region $\left\{ x\in\partial(\Theta_{1}\cup\Theta_{2})(c),\ x\cdot n(x)\geq0\right\} $,
(\ref{eq:lem1dl}) together with the fact that $\tilde{x}\cdot n\geq0$
by convexity of the obstacles, and noticing that 
\begin{equation}
|x-\boldsymbol{c}||x+\boldsymbol{c}|\gtrsim c^{2}\label{eq:finlemgeqc}
\end{equation}
by the hypothesis $R(c)/c\longrightarrow0$, gives the result.

Let us now consider $x$ in the more intricate non star-shaped region
\[
\left\{ x\in\partial(\Theta_{1}\cup\Theta_{2})(c),\ x\cdot n(x)<0\right\} .
\]
On $\partial\Theta_{i}$, $n$ is near the trapped ray associated
with $\mathbb{R}^{d}\backslash(\Theta_{1}\cup\Theta_{2})$ of the
form 
\[
n(x)=(\pm\frac{x_{1}}{|x_{1}|},0,\cdots,0)+(0,\lambda_{2}x_{2},\cdots,\lambda_{d}x_{d})+O(|\tilde{x}|^{2})
\]
with $\lambda_{k}>0$. And thus
\begin{gather}
\left(2c(0,\tilde{x})+x\frac{|\tilde{x}|^{2}}{c}\right)\cdot n(x)\geq\left(2c\min\lambda_{k}-\frac{C}{c}\right)|\tilde{x}|^{2}+O(|\tilde{x}|^{2}).\label{eq:minlamd}
\end{gather}
Because of the uniform convexity assumption of $\Theta_{1,2}$ in
the non star-shaped region (assumption (2) of the lemma), $\min\lambda_{k}$
is bounded below, uniformly in $c$, by a strictly positive, universal
constant. Hence, by (\ref{eq:minlamd}), there exists $\rho\geq0$
and $D_{1}>0$ such that, for every $c>D_{1}$ we have 
\begin{equation}
|\tilde{x}|\leq\rho\implies\left(2c(0,\tilde{x})+x\frac{|\tilde{x}|^{2}}{c}\right)\cdot n(x)\geq0.\label{eq:l1petdelt}
\end{equation}

On the other hand, there exists $\epsilon_{0}>0$ such that, for all
$x\in\partial(\Theta_{1}\cup\Theta_{2})$,
\[
|\tilde{x}|\geq\rho\implies(0,\tilde{x})\cdot n(x)\geq\epsilon_{0}.
\]
Notice that the uniformity in $c$ is a consequence of assumption
(2) again. Hence, if $|\tilde{x}|\geq\rho$
\[
\left(2c(0,\tilde{x})+x\frac{|\tilde{x}|^{2}}{c}\right)\cdot n(x)\geq2c\epsilon_{0}-\frac{C}{c},
\]
and therefore, there exists $D_{2}>0$ such that, if $c>D_{2}$
\begin{equation}
|\tilde{x}|\geq\rho\implies\left(2c(0,\tilde{x})+x\frac{|\tilde{x}|^{2}}{c}\right)\cdot n(x)\geq0.\label{eq:l1grddelt}
\end{equation}

Combining (\ref{eq:lem1dl}), (\ref{eq:l1petdelt}), (\ref{eq:l1grddelt}),
and (\ref{eq:finlemgeqc}) gives the result.
\end{proof}
We are now in position to prove the rigidity theorem:
\begin{proof}[Proof of \thmref{rigidity}.]
By contradiction, let $u\neq0$ be a solution of (\ref{eq:nlsv})
with a relatively compact flow $\left\{ u(t)\text{,\ }t\in\mathbb{R}\right\} $
in $H^{1}$ . 

We choose a system of coordinates such that $\mathcal{R}=\left\{ x_{2}=\cdots=x_{d}=0\right\} $.
Let $c>0$ and $\boldsymbol{c}:=(c,0,\dots,0)$. We would like to
take
\begin{equation}
|x-\boldsymbol{c}|+|x+\boldsymbol{c}|\label{eq:x_c_naiv}
\end{equation}
 as a weight. However, because of the singularities in $\pm\boldsymbol{c}$,
it is not smooth and we cannot use it explicitly. Therefore, we take
instead
\[
\chi_{c}(x):=\left(|x-\boldsymbol{c}|+|x+\boldsymbol{c}|\right)\psi(\frac{x}{c/4}),
\]
where $\psi\in C^{\infty}$ is such that $\psi(x)=1$ for $|x|\leq1$
and $\psi(x)=0$ for $|x|\geq2$. The idea is that now $\chi_{c}$
is smooth, and it coincides with (\ref{eq:x_c_naiv}) in $B(0,c/4)$,
but as $c$ will be sent to infinity, the part outside this ball will
not be seen by compact flow solutions. Let us denote 
\[
z(t)=\int\chi_{c}|u|^{2}.
\]
By (\ref{eq:vir1}), the Cauchy-Schwarz inequality and the conservation
of mass and energy 
\begin{equation}
|z'(t)|\leq\sqrt{CE(u)M(u)}.\label{eq:prig1}
\end{equation}
Moreover, (\ref{eq:vir2}) writes
\begin{multline}
z''(t)=4\int(D^{2}\chi_{c}\nabla u,\nabla u)+\frac{2}{\alpha+2}\int\Delta\chi_{c}|u|^{\alpha+2}\\
-2\int\nabla\chi_{c}\cdot\nabla V|u|^{2}-\int\Delta^{2}\chi_{c}|u|^{2}.\label{eq:zsec}
\end{multline}
Let us write down $D^{2}\chi_{c}$, $\Delta\chi_{c}$ and $\Delta^{2}\chi_{c}$.
To this purpose, let 
\[
\chi_{c}^{-}(x):=|x-\boldsymbol{c}|\psi(\frac{x}{c/4}),\hspace{1em}\chi_{c}^{+}(x):=|x+\boldsymbol{c}|\psi(\frac{x}{c/4}),
\]
in such a way that $\chi_{c}=\chi_{c}^{+}+\chi_{c}^{-}$. We have
\begin{equation}
\Delta\chi_{c}^{\pm}(x)=\frac{d-1}{|x\pm\boldsymbol{c}|}\psi(\frac{x}{c/4})+\frac{8}{c}\frac{x\pm\boldsymbol{c}}{|x\pm\boldsymbol{c}|}\cdot\nabla\psi(\frac{x}{c/4})+\frac{16}{c^{2}}|x\pm\boldsymbol{c}|\Delta\psi(\frac{x}{c/4}),\label{eq:lap_m}
\end{equation}
\begin{align}
D^{2}\chi_{c}^{\pm}(x) & =\frac{1}{|x\pm\boldsymbol{c}|}\Big(\text{Id}-\frac{(x\pm\boldsymbol{c})(x\pm\boldsymbol{c})^{\mathrm{t}}}{|x\pm\boldsymbol{c}|^{2}}\Big)\psi(\frac{x}{c/4})+\frac{4}{c}\frac{x\pm\boldsymbol{c}}{|x\pm\boldsymbol{c}|}\big(\nabla\psi(\frac{x}{c/4})\big)^{\mathrm{t}}\label{eq:hess_m}\\
 & \hspace{1em}+\frac{4}{c}\nabla\psi(\frac{x}{c/4})\big(\frac{x\pm\boldsymbol{c}}{|x\pm\boldsymbol{c}|}\big)^{\mathrm{t}}+\frac{16}{c^{2}}|x\pm\boldsymbol{c}|D^{2}\psi(\frac{x}{c/4}),\nonumber 
\end{align}
\begin{align}
\Delta^{2}\chi_{c}^{\pm}(x) & =-\frac{(d-1)(d-3)}{|x\pm\boldsymbol{c}|^{3}}\psi(\frac{x}{c/4})-\frac{16(d-1)}{c}\frac{x\pm\boldsymbol{c}}{|x\pm\boldsymbol{c}|}\cdot\nabla\psi(\frac{x}{c/4})\label{eq:bil_m} \\
 & \hspace{1em}+\frac{32}{c^{2}}\frac{d+1}{|x\pm\boldsymbol{c}|}\Delta\psi(\frac{x}{c/4})+\frac{256}{c^{3}}\frac{x\pm\boldsymbol{c}}{|x\pm\boldsymbol{c}|}\cdot\nabla\big(\Delta\psi(\frac{x}{c/4})\big) \nonumber \\
 & \hspace{1em}-\frac{64}{c^{2}}\frac{1}{|x\pm\boldsymbol{c}|^{3}}\big(D^{2}\psi(\frac{x}{c/4})(x\pm\boldsymbol{c}),x\pm\boldsymbol{c}\Big)\nonumber
  +\frac{256}{c^{4}}|x\pm\boldsymbol{c}|\Delta^{2}\psi(\frac{x}{c/4}),\nonumber 
\end{align}
where $X^{\mathrm{t}}$ denote the transpose of the column vector
$X\in\mathbb{R}^{d}$. Observe that
\[
x\in\text{Supp}\psi(\frac{\cdot}{c/4})\implies|x\pm c|\geq\frac{c}{2},
\]
therefore, by (\ref{eq:lap_m}), (\ref{eq:hess_m}) and (\ref{eq:bil_m})
\begin{equation}
|\Delta\chi_{c}|+|D^{2}\chi_{c}|+|\Delta^{2}\chi_{c}|\lesssim\frac{1}{c}.\label{eq:mor_dec_c}
\end{equation}
In addition, as $\left\{ u(t)\text{,\ }t\in\mathbb{R}\right\} $ is
relatively compact in $H^{1}$ and by Sobolev embedding in $L^{\alpha+2}$
-- recall that, by assumption (\ref{eq:intercrit}), we are in particular
in the subcritical regime --, we have 
\begin{equation}
\sup_{t\in\mathbb{R}}\left(\Vert u\Vert_{L^{\alpha+2}(|x|\geq c/4)}+\Vert u\Vert_{L^{2}(|x|\geq c/4)}+\Vert\nabla u\Vert_{L^{2}(|x|\geq c/4)}\right)=\epsilon(c),\label{eq:mass_infini}
\end{equation}
where $\epsilon(c)\longrightarrow0$ as $c\longrightarrow+\infty$.
Therefore (\ref{eq:zsec}) together with (\ref{eq:mor_dec_c}) and
(\ref{eq:mass_infini}) yields
\begin{multline}
z''(t)=\int_{B(0,c/4)}4(D^{2}\chi_{c}\nabla u,\nabla u)+\frac{2}{\alpha+2}\Delta\chi_{c}|u|^{\alpha+2}\\ -\Delta^{2}\chi_{c}|u|^{2}-2\int_{\mathbb{R}^{d}}\nabla\chi_{c}\cdot\nabla V|u|^{2}
+\frac{1}{c}\epsilon(c).\label{eq:with_epsc}
\end{multline}
Now, in $B(0,c/4)$, $\chi_{c}$ coincides with (\ref{eq:x_c_naiv})
and:
\begin{gather*}
\Delta\chi_{c}^{\pm}=\frac{d-1}{|x\pm\boldsymbol{c}|},\ D^{2}\chi_{c}^{\pm}=\frac{1}{|x\pm\boldsymbol{c}|}\Big(\text{Id}-\frac{(x\pm\boldsymbol{c})(x\pm\boldsymbol{c})^{\mathrm{t}}}{|x\pm\boldsymbol{c}|^{2}}\Big),\\ \Delta^{2}\chi_{c}^{\pm}=-\frac{(d-1)(d-3)}{|x\pm\boldsymbol{c}|^{3}},
\hspace{1em} \text{in }B(0,c/4).
\end{gather*}
In particular, it verifies there
\[
D^{2}\chi_{c}\geq0,\ \Delta\chi_{c}\gtrsim\frac{1}{c},\Delta^{2}\chi_{c}\leq0\hspace{1em}\text{in }B(0,c/4),
\]
where the sign of $\Delta^{2}\chi_{c}$ is due to $d\geq3$, and therefore,
by (\ref{eq:with_epsc}), we have, for all $A\leq c/4$
\begin{equation}
z''(t)\gtrsim\frac{1}{c}\int_{B(0,A)}|u|^{\alpha+2}-\int_{\mathbb{R}^{d}}\nabla\chi_{c}\cdot\nabla V|u|^{2}+\frac{1}{c}\epsilon(c).\label{eq:eps_c2}
\end{equation}
Now, as $u\neq0$ and $\left\{ u(t)\text{,\ }t\in\mathbb{R}\right\} $
is relatively compact in $H^{1}$, there exists $\mu>0$ and $A>0$
such that
\[
\sup_{t\in\mathbb{R}}\int_{B(0,A)}|u|^{\alpha+2}\geq2\mu.
\]
We fix such an $A>0$ and take $c>0$ large enough so that $A\leq c/4$
and $|\epsilon(c)|\leq\mu$. Then (\ref{eq:eps_c2}) gives
\begin{equation}
z''(t)\gtrsim\frac{1}{c}\mu-\int_{\mathbb{R}^{d}}\nabla\chi_{c}\cdot\nabla V|u|^{2}.\label{eq:zpp1}
\end{equation}
Let $R$ be a continuous function of $c$ such that $R(c)/c\longrightarrow0$
as $c\longrightarrow+\infty$. In the seek of readability, we will
write $R$ for $R(c)$ in the sequel. Note that, by the Hölder inequality
and because $\nabla\chi_{c}$ is bounded and $|x|^{\beta}\nabla V\in L^{d/2}$
\begin{multline}
\big|\int_{|x|\geq R}\nabla\chi_{c}\cdot\nabla V|u|^{2}\big|\leq\Vert\nabla V\Vert_{L^{d/2}(|x|\geq R)}\Vert u\Vert_{L^{2^{\star}}(|x|\geq R)}^{2}\\
\leq\frac{1}{R^{\beta}}\Vert|x|^{\beta}\nabla V\Vert_{L^{d/2}}\Vert u\Vert_{L^{2^{\star}}(|x|\geq R)}^{2},\label{eq:decrig1}
\end{multline}
but, because $\left\{ u(t)\text{,\ }t\in\mathbb{R}\right\} $ is relatively
compact in $H^{1}$ and by Sobolev embedding in $L^{2^{\star}}$,
\begin{equation}
\sup_{t\in\mathbb{R}}\Vert u\Vert_{L^{2^{\star}}(|x|\geq R)}=\epsilon(R),\label{eq:decrig2}
\end{equation}
where $\epsilon(R)\longrightarrow0$ when $R\longrightarrow+\infty$
and thus, using (\ref{eq:zpp1}), (\ref{eq:decrig1}) and (\ref{eq:decrig2})
\begin{equation}
z''(t)\gtrsim\mu/c-\int_{B(0,R)}\nabla\chi_{c}\cdot\nabla V|u|^{2}+\frac{1}{R^{\beta}}\epsilon(R).\label{eq:zpp2}
\end{equation}
Now, notice that as $R(c)\ll c$, $\chi_{c}$ coincides with (\ref{eq:x_c_naiv})
in $B(0,R)$ and in particular
\[
\nabla\chi_{c}(c)=\frac{x-\boldsymbol{c}}{|x-\boldsymbol{c}|}+\frac{x+\boldsymbol{c}}{|x+\boldsymbol{c}|}\ \text{in }B(0,R).
\]
Because $V_{1}$ and $V_{2}$ are repulsive (assumption (G1)), the
outward-pointing normal to their level surfaces is 
\[
-\frac{\nabla V_{1,2}}{|\nabla V_{1,2}|}.
\]
Thus, by \lemref{poids} applied to the level surfaces of $V_{1}$
and $V_{2}$, together with assumptions (G2) and (G3), we get, in
$B(0,R)$
\[
-\nabla\chi_{c}\cdot\frac{\nabla V_{1,2}}{|\nabla V_{1,2}|}\geq O(\frac{R^{4}}{c^{4}}).
\]
Therefore, by Hölder inequality, Sobolev embedding and conservation
of energy
\begin{multline}
\Big|\int_{|x|\leq R,-\nabla\chi\cdot\nabla V(x)<0}-\nabla\chi_{c}\cdot\nabla V|u|^{2}\Big|\lesssim\frac{R^{4}}{c^{4}}\int\big(|\nabla V_{1}|+|\nabla V_{2}|\big)|u|^{2}\\
\leq\frac{R^{4}}{c^{4}}\big(\Vert\nabla V_{1}\Vert_{L^{d/2}}+\Vert\nabla V_{2}\Vert_{L^{d/2}}\big)\Vert u\Vert_{L^{2^{\star}}}^{2}\\
\leq\frac{R^{4}}{c^{4}}\big(\Vert\nabla V_{1}\Vert_{L^{d/2}}+\Vert\nabla V_{2}\Vert_{L^{d/2}}\big)\Vert u\Vert_{H^{1}}^{2}\lesssim\frac{R^{4}}{c^{4}}E(u_{0})^{2}.\label{eq:rigpartneg}
\end{multline}
Hence, (\ref{eq:zpp2}) together with (\ref{eq:rigpartneg}) gives
\[
z''(t)\gtrsim\frac{\mu}{c}+O(\frac{R^{4}}{c^{4}})+\frac{1}{R^{\beta}}\epsilon(R).
\]
Let us take $R(c)=c^{\nu}$. Then we get
\[
z''(t)\gtrsim\frac{1}{c}(\mu+O(c^{4\nu-3})+c^{1-\beta\nu}\epsilon(c^{\nu})).
\]
Thus, taking
\[
\nu=\frac{1}{\beta},
\]
and assuming 
\[
\nu<\frac{3}{4}\iff\beta>\frac{4}{3},
\]
in such a way that $R(c)/c\longrightarrow0$ and, in particular, $4\nu-3<0$,
we get, for $c>0$ fixed large enough
\[
z''(t)\gtrsim\frac{\mu}{2c},
\]
and (\ref{eq:prig1}) is contradicted.
\end{proof}
Our main result now follows:
\begin{proof}[Proof of \thmref{main}.]
If $E_{c}<\infty$, then \thmref{critsol} allows us to extract a
critical element $\varphi_{c}\in H^{1}$, $\varphi_{c}\neq0$, such
that the corresponding solution $u_{c}$ of (\ref{eq:nlsv}) verifies
that $\left\{ u_{c}(t),\ t\geq0\right\} $ is relatively compact in
$H^{1}$. By \thmref{rigidity}, such a solution cannot exist, so
$E_{c}=\infty$ and by Proposition \ref{pert1}, all the solutions
of (\ref{eq:nlsv}) scatter in $H^{1}$.
\end{proof}

\subsection*{Aknowledgments}

The author is grateful to Fabrice Planchon for many enlightening discussions
about multipliers methods, and to the anonymous referee for their
valuable remarks and suggestions.

\bibliographystyle{amsalpha}
\bibliography{ref}

\end{document}